\DeclareMathOperator{\domain}{Dom}
\DeclareMathOperator{\spann}{span}
\newtheorem{theor}{Theorem}[section]
\newtheorem{propo}[theor]{Proposition}
\newtheorem{lemma}[theor]{Lemma}
\newtheorem*{rem*}{Remark}
\newtheorem{conj}[theor]{Conjecture}
\begin{document}
\footnotetext{
\emph{2010 Mathematics Subject Classification:} primary 42C10; secondary 35K08.\\
\emph{Key words and phrases:} Fourier-Bessel expansions, heat kernel, Poisson kernel,
		subordinated kernel, heat semigroup maximal operator.	
}

\title[Heat kernel bounds in the F-B setting]
    {On sharp heat and subordinated kernel estimates\\ in the Fourier-Bessel setting}


\author[A. Nowak]{Adam Nowak}
\author[L. Roncal]{Luz Roncal}

\address{Adam Nowak, \newline
			Instytut Matematyczny,
      Polska Akademia Nauk, \newline
      \'Sniadeckich 8,
      00--956 Warszawa, Poland
      }
\email{adam.nowak@impan.pl}
\thanks{Research of the first-named author supported by MNiSW Grant N N201 417839.
\\ \indent Research of the second-named author supported by the grant MTM2009-12740-C03-03 of the DGI}

\address{Luz Roncal, \newline Departamento de Matem\'aticas y Computaci\'on,
	Universidad de La Rioja, \newline Edificio J.L.~Vives, Calle Luis
	de Ulloa s/n, 26004 Logro\~no, Spain}
\email{luz.roncal@unirioja.es}

\begin{abstract}
We prove qualitatively sharp heat kernel bounds in the setting of Fourier-Bessel expansions
when the associated type parameter $\nu$ is half-integer.
Moreover, still for half-integer~$\nu$, we also obtain sharp estimates of all kernels
subordinated to the heat kernel. Analogous estimates for general
$\nu > -1$ are conjectured. Some consequences concerning the related
heat semigroup maximal operator are discussed.
\end{abstract}

\maketitle

\section{Introduction} \label{sec:intro}

Let $J_{\nu}$ denote the Bessel function of the first kind and order $\nu$ and let
$\{\lambda_{n,\nu} : n \ge 1\}$ be the sequence of successive positive zeros of $J_{\nu}$
in increasing order. It is well known that for each fixed $\nu >-1$ the functions
$$
\phi_n^{\nu}(x) = d_{n,\nu}\, x^{-\nu-1\slash 2} (\lambda_{n,\nu}x)^{1\slash 2}
J_{\nu}(\lambda_{n,\nu}x), \qquad n=1,2,\ldots,
$$
form an orthonormal basis in $L^2((0,1),d\mu_{\nu})$; here
$d_{n,\nu} = \sqrt{2}|\lambda_{n,\nu}^{1\slash 2}J_{\nu+1}(\lambda_{n,\nu})|^{-1}$ are
normalizing constants and the measure is given by
$$
d\mu_{\nu}(x) = x^{2\nu+1}\,dx.
$$
The system $\{\phi_{n}^{\nu} : n=1,2,\ldots\}$ is usually referred to as the Fourier-Bessel system.
Another Fourier-Bessel system arises by considering the functions
$$
\psi_n^{\nu}(x) = x^{\nu+1\slash 2} \phi_n^{\nu}(x), \qquad n=1,2,\ldots,
$$
which form an orthonormal basis in $L^2((0,1),dx)$.
Fourier-Bessel expansions are of interest and exist in the literature for a long time, see
\cite[Chapter XVIII]{Wat}. The related convergence problems were investigated in
\cite{BP, BPconv, Wing}. For the study of several fundamental harmonic analysis operators in the
Fourier-Bessel context see \cite{CiauRo, CiauRon, CRae, CR, OS, OScon, Muc-St} and references therein.
Some interesting open questions concerning harmonic analysis of Fourier-Bessel expansions can be found in
a recent paper by Betancor \cite{Bet}.

Given $\nu>-1$, consider the differential operator
$$
L_{\nu} = -\Delta - \frac{2\nu+1}{x} \, \frac{d}{dx},
$$
which is symmetric and nonnegative on $C_c^2(0,1) \subset L^2((0,1),d\mu_{\nu})$.
Each $\phi_n^{\nu}$, $n=1,2,\ldots$, is an eigenfunction of $L_{\nu}$ with the corresponding eigenvalue
$\lambda_{n,\nu}^2$,
$$
L_{\nu}\phi_n^{\nu} = \lambda_{n,\nu}^2 \phi_n^{\nu}.
$$
This leads to a natural self-adjoint extension of $L_{\nu}$ given by
$$
\mathcal{L}_{\nu}f = \sum_{n=1}^{\infty} \lambda_{n,\nu}^2 \, \langle f, \phi_n^{\nu}\rangle_{d\mu_{\nu}}\,
	\phi_n^{\nu}
$$
on the domain $\domain \mathcal{L}_{\nu}$ consisting of all $f \in L^2((0,1),d\mu_{\nu})$ for which
the series converges in $L^2((0,1),d\mu_{\nu})$;
here $\langle f, \phi_n^{\nu}\rangle_{d\mu_{\nu}}$ is the $n$th Fourier-Bessel coefficient of $f$.
Clearly, the spectral decomposition of
$\mathcal{L}_{\nu}$ is given by the $\phi_n^{\nu}$.

The aim of this paper is to find sharp estimates for the kernels
\begin{equation} \label{ker}
G_t^{\nu,\alpha}(x,y) = \sum_{n=1}^{\infty} \exp\big(-t \lambda_{n,\nu}^{\alpha}\big)
 \phi_n^{\nu}(x)\phi_n^{\nu}(y), \qquad x,y \in (0,1), \quad t>0,
\end{equation}
where $\alpha \in (0,2]$ is the index of subordination.
These are precisely the integral kernels of the semigroups
$$
\big\{T_t^{\nu,\alpha}\big\}_{t>0} := \big\{ \exp(-t\mathcal{L}_{\nu}^{\alpha\slash 2})\big\}_{t>0}
$$
generated either by $\mathcal{L}_{\nu}$ itself ($\alpha=2$) or by its fractional powers ($\alpha<2$).
Needless to say, the cases $\alpha=2,1$ are of prior importance since then $\eqref{ker}$ becomes
the heat or the Poisson kernel, respectively, related to the system $\{\phi_n^{\nu}\}$.
The behavior of $G_t^{\nu,\alpha}(x,y)$ does not seem to have been studied before.
Note that for short times $t$ a direct analytic treatment of the series in
\eqref{ker} is a complicated matter except for a few particular cases discussed in Section \ref{sec:rem}.
First of all, the series defining $G_t^{\nu,\alpha}(x,y)$ is highly oscillating, and its behavior
is hidden behind subtle cancellations between the oscillations. Moreover, $J_{\nu}$ is in general
a transcendental function and there is no explicit formula for its zeros.
Finally, the power $\alpha$ of the eigenvalues in the argument of the exponential makes the situation even
more sophisticated. These obstacles can be better understood by means of the explicit computations
in Section \ref{sec:rem} for the cases $\nu={\pm 1\slash 2}$, $\alpha =1,2$.

Our method of estimating $G_t^{\nu,\alpha}(x,y)$ is based on a connection between the Fourier-Bessel
context and the classical setting related to multi-dimensional Euclidean balls.
More precisely, we show that the fundamental solution $\mathcal{G}_t^{(d),2}(\mathtt{x},\mathtt{y})$
of the classical heat equation in the $d$-dimensional unit ball,
subject to the Dirichlet boundary condition, is related to $G_t^{\nu,2}(x,y)$ with
$\nu=d\slash 2 -1$. This allows us to transfer known bounds for
$\mathcal{G}_t^{(d),2}(\mathtt{x},\mathtt{y})$
to the Fourier-Bessel framework on the interval $(0,1)$ and conclude qualitatively sharp estimates
for $G_t^{\nu,2}(x,y)$ when $\nu$ is half-integer, see Theorem \ref{thm:main}.
Essentially the same procedure applies to the subordinated kernels
$\mathcal{G}_t^{(d),\alpha}(\mathtt{x},\mathtt{y})$ and $G_t^{\nu,\alpha}(x,y)$,
see Theorem \ref{thm:poisson}. We emphasize that the sharp bounds for
$\mathcal{G}_t^{(d),\alpha}(\mathtt{x},\mathtt{y})$, $0<\alpha \le 2$, are quite strong results
and seem to have been obtained in a complete form only recently \cite{Z, Song};
this may be a bit surprising, in view of simplicity of the geometry of the underlying domain,
the Euclidean ball.
For large values of $t$ sharp estimates of $G_t^{\nu,\alpha}(x,y)$ can be derived directly
from the series representation \eqref{ker}, thanks to the exponential decay. In this case all
$\nu >-1$ are covered, see Theorem \ref{thm:large}.
We remark that all the results can be immediately translated to the setting of the system
$\{\psi_n^{\nu}\}$ since the relevant kernels coincide up to the factor $(xy)^{\nu+1\slash 2}$.

The paper is organized as follows. In Section \ref{sec:balls} we first discuss the connection between the
Fourier-Bessel setting in the interval $(0,1)$ and the situation associated with Euclidean balls,
and then relate the kernels $G_t^{\nu,\alpha}(x,y)$ and $\mathcal{G}_t^{(d),\alpha}(\mathtt{x},\mathtt{y})$.
In Section \ref{sec:heat} we invoke necessary bounds for
$\mathcal{G}_t^{(d),\alpha}(\mathtt{x},\mathtt{y})$ and transfer them to the Fourier-Bessel
setting in the interval. We also establish the long time behavior of $G_t^{\nu,\alpha}(x,y)$
by analyzing the defining series.
Finally, Section \ref{sec:rem} is devoted to various comments and remarks.
These concern, in particular, mapping properties
of the maximal operators of the semigroups $\{T_t^{\nu,\alpha}\}_{t>0}$, $0<\alpha\le 2$.

Throughout the paper we use a standard notation.
While writing estimates, we will use the notation $X \lesssim Y$ to
indicate that $X \le CY$ with a positive constant $C$ independent of significant quantities. We shall
write $X \simeq Y$ when simultaneously $X \lesssim Y$ and $Y \lesssim X$.

\subsection*{Acknowledgment}
This research was started during the sojourn in Wroc\l{}aw of the second-named author during the 
winter of 2011. She acknowledges a financial support received from Politechnika Wroc\l{}awska 
and wants to thank for the hospitality offered.

\section{Connection with multi-dimensional balls} \label{sec:balls}

For $d \ge 1$, let $B^d=\{\mathtt{x} \in \mathbb{R}^d : |\mathtt{x}|<1\}$ be the unit ball in $\mathbb{R}^d$
and denote $S^{d-1}=\partial B^d$. It is well known (see for instance \cite[Chapter 2, H]{Folland})
that there exists an orthonormal basis of eigenfunctions associated with the Dirichlet Laplacian
in $B^d$. These are expressed explicitly by the functions $\phi_n^{\nu}$ and spherical harmonics,
as described below. For more details on spherical harmonics and their connections with symmetry properties
of the Fourier transform we refer to \cite[Chapter IV]{SW}.

Given $k \ge 0$, let $\mathcal{P}_k$ be the space of homogeneous polynomials of degree $k$ in
$\mathbb{R}^d$. Taking harmonic polynomials in $\mathcal{P}_k$ and restricting them to $S^{d-1}$ we
obtain the space
$$
    H_k =\big\{P|_{S^{d-1}}:P\in \mathcal{P}_k \textrm{ and }\Delta P=0\big\}.
$$
The elements of $H_k$ are called spherical harmonics of degree $k$. The dimension of $H_k$ is finite,
in fact we have
$$
\mathfrak{d}_k=\dim H_k=(2k+d-2)\frac{(k+d-3)!}{k!(d-2)!}, \qquad d \ge 2.
$$
The case $d=1$ is degenerated: $S^0$ consists of two points and so $\mathfrak{d}_0=\mathfrak{d}_1=1$
and $\mathfrak{d}_k=0$ for $k>1$.

Let $\{Y_m^k : 1 \le m \le \mathfrak{d}_k\}$ be an orthonormal basis for $H_k$ in $L^2(S^{d-1},d\sigma)$,
where $\sigma$ is the standard (non-normalized) surface area measure on $S^{d-1}$. Define the functions
$$
\Phi_{n,k,m}^{(d)}(\mathtt{x}) = \phi_{n}^{k+d\slash 2-1}\big(|\mathtt{x}|\big)
	Y_m^k\left( \frac{\mathtt{x}}{|\mathtt{x}|}\right), \qquad n \ge 1, \quad k \ge 0, \quad
		1\le m \le \mathfrak{d}_k;
$$
here and later on we tacitly assume that for $d=1$ only $k=0,1$ are considered.
The system
$$
\big\{ \Phi_{n,k,m}^{(d)} : n \ge 1, k \ge 0, 1\le m \le \mathfrak{d}_k \big\}
$$
is orthonormal and complete in $L^2(B^d,d\mathtt{x})$. Moreover, it consists of eigenfunctions
of the Dirichlet Laplacian in $B^d$,
$$
\Delta \Phi_{n,k,m}^{(d)} = -\lambda_{n,k+d\slash 2 -1}^2 \Phi_{n,k,m}^{(d)}, \qquad
	\Phi_{n,k,m}^{(d)}\Big|_{S^{d-1}} = 0.
$$
Thus the associated heat kernel and the subordinated kernels are expressed as
$$
\mathcal{G}_t^{(d),\alpha}(\mathtt{x},\mathtt{y}) = \sum_{n \ge 1} \sum_{k \ge 0}
	\sum_{1 \le m \le \mathfrak{d}_k} \exp\big({-t\lambda_{n,k+d\slash 2-1}^{\alpha}}\big)
	\Phi_{n,k,m}^{(d)}(\mathtt{x}) \Phi_{n,k,m}^{(d)}(\mathtt{y}), \qquad \mathtt{x},\mathtt{y} \in B^d,
		\quad t>0,
$$
where $\alpha \in (0,2]$ is the subordination index.
These are the integral kernels of the semigroups
$$
\big\{ \mathcal{T}_t^{(d),\alpha}\big\}_{t>0} :=
	\big\{ \exp\big(-t(-\Delta)^{\alpha\slash 2}\big)\big\}_{t>0},
$$
where $-\Delta$ is understood to be the nonnegative self-adjoint operator in $L^2(B^d,d\mathtt{x})$
whose spectral resolution is given by the $\Phi_{n,k,m}^{(d)}$. The domain of this operator can be
identified with the Sobolev space $H_0^1(B^d)$, cf. \cite{Ba-Co}.

We now observe that the analysis of the radial case in the context of expansions with respect
to $\{\Phi_{n,k,m}^{(d)}\}$ reduces to the Fourier-Bessel setting in the interval $(0,1)$ with
the type index $\nu=d\slash 2-1$. Indeed, if $F(\mathtt{x})=f(|\mathtt{x}|)$ is radial,
then integrating in polar coordinates and taking into account that
$Y_1^0 \equiv (\sigma(S^{d-1}))^{-1\slash 2}$ we see that
$$
\langle F, \Phi_{n,0,1}^{(d)}\rangle =
 \int_{B^d} F(\mathtt{x}) \Phi_{n,0,1}^{(d)}(\mathtt{x})\, d\mathtt{x}
	= c_d \int_0^1 f(x) \phi_n^{d\slash 2-1}(x)\, x^{d-1}\, dx
	= c_d \langle f, \phi_n^{d\slash 2-1}\rangle_{d\mu_{d\slash 2-1}},
$$
where $c_d=(\sigma(S^{d-1}))^{1\slash 2}$. On the other hand, another integration in polar coordinates
shows that for $k > 0$ the Fourier coefficients $\langle F, \Phi_{n,k,m}^{(d)}\rangle$ vanish
since then $Y_m^k \perp Y_1^0 \equiv \textrm{const.}$ Thus the expansion of $F$ in $B^d$
is in fact the expansion of its profile $f$ with respect to the system
$\{\phi_n^{d\slash 2-1}\}$ on $(0,1)$.
Similarly, the associated heat and subordinated semigroups are also related via the radial case,
as stated below.

\begin{propo} \label{sem_con}
Let $d \ge 1$, $\nu=d\slash 2-1$ and $f \in \spann\{\phi_n^{\nu}: n \ge 1\}$. Then
$$
\big( T_t^{\nu,\alpha}f\big) \circ \varphi(\mathtt{x}) =
	\mathcal{T}_t^{(d),\alpha}(f\circ \varphi)(\mathtt{x}), \qquad \mathtt{x} \in B^d,
$$
where $\varphi(\mathtt{x})=|\mathtt{x}|$.
\end{propo}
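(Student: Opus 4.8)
The plan is to reduce the identity to a finite computation by exploiting that $f$ lies in the \emph{finite-dimensional} span $\spann\{\phi_n^{\nu}\}$, so that no convergence issues arise and both semigroups act simply by multiplying each eigencomponent by its exponential factor.

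First I would write $f = \sum_n a_n \phi_n^{\nu}$ as a finite sum, with $a_n = \langle f, \phi_n^{\nu}\rangle_{d\mu_{\nu}}$. Applying the spectral definition of $T_t^{\nu,\alpha}$ gives $(T_t^{\nu,\alpha}f)(x) = \sum_n e^{-t\lambda_{n,\nu}^{\alpha}} a_n \phi_n^{\nu}(x)$, and hence, composing with $\varphi$,
$$
\big(T_t^{\nu,\alpha}f\big)\circ\varphi(\mathtt{x}) = \sum_n e^{-t\lambda_{n,\nu}^{\alpha}}\, a_n\, \phi_n^{\nu}\big(|\mathtt{x}|\big), \qquad \mathtt{x}\in B^d.
$$

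Next I would compute the right-hand side. Setting $F = f\circ\varphi$, I invoke the two facts established just before the statement: since $F$ is radial, its Fourier coefficients $\langle F, \Phi_{n,k,m}^{(d)}\rangle$ vanish for $k>0$, while $\langle F, \Phi_{n,0,1}^{(d)}\rangle = c_d\, a_n$ with $c_d = (\sigma(S^{d-1}))^{1\slash 2}$. Inserting these into the series defining $\mathcal{T}_t^{(d),\alpha}$, and noting that on the surviving $k=0$ modes the relevant eigenvalue is $\lambda_{n,d\slash 2-1} = \lambda_{n,\nu}$, I obtain $\mathcal{T}_t^{(d),\alpha}F = \sum_n e^{-t\lambda_{n,\nu}^{\alpha}} c_d\, a_n\, \Phi_{n,0,1}^{(d)}$. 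Finally I substitute the explicit form $\Phi_{n,0,1}^{(d)}(\mathtt{x}) = \phi_n^{\nu}(|\mathtt{x}|)\, Y_1^0 = c_d^{-1}\phi_n^{\nu}(|\mathtt{x}|)$, so the two factors $c_d$ cancel and the expression collapses exactly to $\sum_n e^{-t\lambda_{n,\nu}^{\alpha}} a_n \phi_n^{\nu}(|\mathtt{x}|)$, which is precisely the left-hand side.

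I do not expect a genuine obstacle here: the argument is essentially bookkeeping built on the radial reduction already carried out in the text. The only points requiring care are confirming that the eigenvalue index $k+d\slash 2-1$ reduces to $\nu$ exactly on the surviving $k=0$ modes, and tracking the normalization constant $c_d$ so as to verify that it cancels. Because $f$ lies in a finite span of eigenfunctions, every series involved is finite and the identity holds pointwise with no approximation step; a density and continuity extension to larger classes of $f$, if ever desired, would then follow routinely from the contractivity of the two semigroups.
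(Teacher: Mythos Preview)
Your proposal is correct and follows essentially the same approach as the paper: both arguments verify the identity on eigenfunctions using the radial reduction $\Phi_{n,0,1}^{(d)}(\mathtt{x}) = c_d^{-1}\phi_n^{\nu}(|\mathtt{x}|)$ and the cancellation of the constant $c_d$, with the paper treating a single $\phi_n^{\nu}$ and then invoking linearity while you carry the finite sum through explicitly.
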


\begin{proof}
For $f=\phi_n^{\nu}$ we have
\begin{align*}
\big( T_t^{\nu,\alpha}\phi_n^{\nu}\big) \circ \varphi(\mathtt{x})
& = e^{-t\lambda_{n,\nu}^{\alpha}}\phi_n^{\nu}(|\mathtt{x}|) \\
& = c_d e^{-t\lambda_{n,d\slash 2-1}^{\alpha}} \Phi_{n,0,1}^{(d)}(\mathtt{x})
= c_d \mathcal{T}_t^{(d),\alpha} \Phi_{n,0,1}^{(d)}(\mathtt{x})
= \mathcal{T}_t^{(d),\alpha}\big(\phi_n^{\nu}\circ \varphi\big)(\mathtt{x}),
\end{align*}
where $c_d=(\sigma(S^{d-1}))^{1\slash 2}$. The conclusion follows.
\end{proof}

In fact the semigroups are related in the same way for more general functions $f$.
This is confirmed by the relation between the corresponding integral kernels established below.
\begin{theor} \label{ker_con}
Let $d \ge 1$ and $\nu=d\slash 2-1$. Then
$$
G_t^{\nu,\alpha}(x,y) = \int_{S^{d-1}} \mathcal{G}_t^{(d),\alpha}(\mathtt{x},y\xi)\, d\sigma(\xi),
\qquad x,y \in (0,1), \quad t>0,
$$
where $\mathtt{x}=(x,0,\ldots,0) \in B^d$.
\end{theor}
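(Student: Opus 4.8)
The plan is to derive the kernel identity from the semigroup relation already established in Proposition \ref{sem_con}, combined with a density argument, rather than by manipulating the triple series defining $\mathcal{G}_t^{(d),\alpha}$ directly. First I would fix $t>0$ and the point $\mathtt{x}=(x,0,\ldots,0)$, so that $|\mathtt{x}|=x$, and test both semigroups against an arbitrary radial function $F=f\circ\varphi$ with $f\in\spann\{\phi_n^{\nu}:n\ge1\}$.

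Writing the action of $\mathcal{T}_t^{(d),\alpha}$ as integration against its kernel, passing to polar coordinates $\mathtt{y}=r\xi$ with $d\mathtt{y}=r^{d-1}\,dr\,d\sigma(\xi)$, and using that $d=2\nu+2$ so that $r^{d-1}\,dr=d\mu_{\nu}(r)$, I obtain
\begin{equation*}
\mathcal{T}_t^{(d),\alpha}(f\circ\varphi)(\mathtt{x})
= \int_0^1 \bigg( \int_{S^{d-1}} \mathcal{G}_t^{(d),\alpha}(\mathtt{x},r\xi)\, d\sigma(\xi) \bigg) f(r)\, d\mu_{\nu}(r).
\end{equation*}
On the other hand, Proposition \ref{sem_con} identifies the left-hand side with $(T_t^{\nu,\alpha}f)(x)=\int_0^1 G_t^{\nu,\alpha}(x,r)\,f(r)\,d\mu_{\nu}(r)$. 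Subtracting the two representations shows that, for fixed $x$ and $t$, the function $r\mapsto G_t^{\nu,\alpha}(x,r)-\int_{S^{d-1}}\mathcal{G}_t^{(d),\alpha}(\mathtt{x},r\xi)\,d\sigma(\xi)$ is orthogonal in $L^2((0,1),d\mu_{\nu})$ to every $f\in\spann\{\phi_n^{\nu}\}$.

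Since that span is dense in $L^2((0,1),d\mu_{\nu})$ and both kernels, for fixed $x$ and $t$, lie in $L^2((0,1),d\mu_{\nu})$ (the factor $e^{-t\lambda^{\alpha}}$ guarantees square-integrability), the difference must vanish for almost every $r$, and hence identically once one notes that both sides are continuous in $r$. I expect the two delicate points to be the Fubini interchange in the polar-coordinate computation and this passage from a dense subspace to a genuine pointwise identity; both are controlled by the exponential decay of the eigenvalue factors, which renders $\mathcal{G}_t^{(d),\alpha}(\mathtt{x},\cdot)$ a bona fide $L^2$ (indeed smooth) kernel and makes the defining series locally uniformly convergent, yielding the required continuity.

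As an alternative I could argue purely at the level of series: inserting the explicit expansion of $\mathcal{G}_t^{(d),\alpha}$, integrating each summand over $S^{d-1}$, and invoking $\int_{S^{d-1}}Y_m^k\,d\sigma=0$ for $k\ge1$ together with $Y_1^0\equiv c_d^{-1}$, so that the surviving $k=0$ term carries the factor $Y_1^0(\mathtt{x}/|\mathtt{x}|)\int_{S^{d-1}}Y_1^0\,d\sigma=1$, collapses the whole sum to exactly $\sum_{n\ge1} e^{-t\lambda_{n,\nu}^{\alpha}}\phi_n^{\nu}(x)\phi_n^{\nu}(y)=G_t^{\nu,\alpha}(x,y)$. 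Here the sole obstacle is again legitimizing the term-by-term integration, which rests on the exponential decay dominating the polynomial growth of $\mathfrak{d}_k$ and of the sup-norms of $\phi_n^{k+d/2-1}$ and $Y_m^k$, uniformly in the summation indices.
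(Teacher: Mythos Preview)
Your primary argument is correct and is essentially the paper's own proof: apply Proposition \ref{sem_con} to $f\in\spann\{\phi_n^{\nu}\}$, express both sides via their integral kernels in polar coordinates, use density of the span in $L^2((0,1),d\mu_{\nu})$ to get the identity a.e., and upgrade to a pointwise equality by continuity. The alternative series computation you outline is also valid but is not the route taken in the paper.
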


\begin{proof}
Let $f \in \spann\{\phi_n^{\nu}\}$. Using Proposition \ref{sem_con} and then integrating in polar
coordinates we get
\begin{align*}
\int_0^1 G_t^{\nu,\alpha}(|\mathtt{x}|,y)f(y)y^{2\nu+1}\,dy &
= \int_{B^d} \mathcal{G}_t^{(d),\alpha}(\mathtt{x},\mathtt{y})f(|\mathtt{y}|)\, d\mathtt{y}\\
& = \int_0^1 \int_{S^{d-1}} \mathcal{G}_t^{(d),\alpha}(\mathtt{x},y\xi)\, d\sigma(\xi) \, f(y)y^{d-1}\,dy.
\end{align*}
Since the subspace spanned by the $\phi_n^{\nu}$ is dense in $L^2((0,1),d\mu_{\nu})$
we conclude the desired identity up to an exceptional set of $y$ of measure $0$.
This set, however, must be empty because the kernels involved are continuous with respect
to their arguments.
\end{proof}

Actually, each of the kernels $G_t^{\nu,\alpha}(x,y)$
and $\mathcal{G}_t^{(d),\alpha}(\mathtt{x},\mathtt{y})$
is a jointly smooth function of $t>0$ and its arguments. This can be verified even directly, by
term by term differentiation of the defining series, with the aid of basic bounds for
$d_{n,\nu}$, $\lambda_{n,\nu}$, $J_{\nu}$ (see \cite[Section 2]{OS}) and the fact that
$|Y_m^k(\xi)|$ can be dominated, uniformly in $m$ and $\xi$, by a polynomial in $k$
(see for instance \cite[Chapter IV, Corollary 2.9 (b)]{SW}).
We leave details to interested readers.

\section{Kernel estimates} \label{sec:heat}

Heat kernels in various contexts were extensively investigated in the literature.
In particular, it is well known that the heat kernel corresponding to the Dirichlet Laplacian
in $B^d$, $d \ge 1$, satisfies the bounds (cf. \cite[(1.9.1)]{Da})
\begin{equation} \label{rb}
0 < \mathcal{G}_t^{(d),2}(\mathtt{x},\mathtt{y}) \le \frac{1}{(4\pi t)^{d\slash 2}}
	\exp\bigg({-\frac{|\mathtt{x}-\mathtt{y}|^2}{4t}}\bigg),
	\qquad \mathtt{x},\mathtt{y} \in B^d, \quad t>0.
\end{equation}
These bounds are perhaps most clear from the probabilistic point of view since
$\mathcal{G}_t^{(d),2}(\mathtt{x},\mathtt{y})$ is just the transition probability density of the time-scaled
Brownian motion $B_{2t}$ killed upon leaving $B^d$. However, finding sharp bounds for
$\mathcal{G}_t^{(d),2}(\mathtt{x},\mathtt{y})$ that describe precisely the interplay between
$\mathtt{x},\mathtt{y}$ and $t$, and the boundary behavior, is a much more complicated matter.
The complete and qualitatively sharp estimates are available only recently,
for bounded $C^{1,1}$ domains.
The case of $B^d$, obviously being geometrically simpler, does not seem to have been known earlier.

\begin{theor}[\cite{DS,Da1,Z,Song}] \label{thm:zhang}
Let $d \ge 1$. Given $T>0$, there exists a constant $c>1$ such that
\begin{align}
& \bigg[ \frac{(1-|\mathtt{x}|)(1-|\mathtt{y}|)}{t} \wedge 1 \bigg] \frac{1}{t^{d\slash 2}}
	\exp\bigg({-\frac{c|\mathtt{x}-\mathtt{y}|^2}{t}}\bigg) \nonumber \\ & \qquad \lesssim
	\mathcal{G}_t^{(d),2}(\mathtt{x},\mathtt{y}) \lesssim
	\bigg[ \frac{(1-|\mathtt{x}|)(1-|\mathtt{y}|)}{t} \wedge 1 \bigg] \frac{1}{t^{d\slash 2}}
	\exp\bigg({-\frac{|\mathtt{x}-\mathtt{y}|^2}{ct}}\bigg), \label{hb}
\end{align}
uniformly in $\mathtt{x},\mathtt{y} \in B^d$ and $0<t\le T$. Moreover, if $T$ is chosen sufficiently
large, then
\begin{equation} \label{hbl}
\mathcal{G}_t^{(d),2}(\mathtt{x},\mathtt{y}) \simeq (1-|\mathtt{x}|)(1-|\mathtt{y}|)
	\exp\big({-t \lambda_{1,d\slash 2-1}^2}\big), \qquad \mathtt{x},\mathtt{y} \in B^d, \quad t\ge T.
\end{equation}
\end{theor}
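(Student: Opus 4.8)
The plan is to read off both estimates from the general theory of the Dirichlet Laplacian on a bounded smooth domain, specialized to $B^d$, where the distance from $\mathtt{x}$ to $S^{d-1}$ is simply $1-|\mathtt{x}|$. Since $B^d$ is smooth, it is in particular a bounded $C^{1,1}$ domain, so the short-time bounds \eqref{hb} are an instance of the sharp two-sided Dirichlet heat kernel estimates of \cite{Z,Song}; inserting $1-|\mathtt{x}|$ and $1-|\mathtt{y}|$ for the boundary distances into the general statement reproduces \eqref{hb} verbatim. The remaining task is thus mainly to recall how such estimates are built, and to treat the long-time regime \eqref{hbl} separately.

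To indicate the mechanism behind \eqref{hb}, I would split the upper bound into a global Gaussian factor and a boundary-decay factor. The Gaussian factor $t^{-d\slash 2}\exp(-|\mathtt{x}-\mathtt{y}|^2\slash(ct))$ is already furnished by \eqref{rb}. The boundary factor $\frac{(1-|\mathtt{x}|)(1-|\mathtt{y}|)}{t}\wedge 1$ reflects the vanishing of the kernel on $S^{d-1}$: near a boundary point the domain is well approximated by its tangent half-space, whose Dirichlet heat kernel is the explicit difference of a direct and a reflected Gaussian, and this difference is comparable to $\frac{(1-|\mathtt{x}|)(1-|\mathtt{y}|)}{t}\wedge 1$ times the free Gaussian. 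Propagating the comparison to $B^d$ by domain monotonicity yields the upper bound. For the matching lower bound I would combine the interior parabolic Harnack inequality with a Harnack-chain argument to transport a positive lower bound from the center of $B^d$ to arbitrary points, and invoke the boundary Harnack principle to recover the correct linear decay as $\mathtt{x},\mathtt{y}\to S^{d-1}$; the enlarged constant $c$ in the Gaussian exponent then absorbs the metric distortion introduced by the chaining.

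For the long-time bounds \eqref{hbl} I would argue directly from the eigenfunction expansion of $\mathcal{G}_t^{(d),2}$. The bottom of the spectrum is the simple eigenvalue $\lambda_{1,d\slash 2-1}^2$, attained at $n=1$, $k=0$, with a radial, strictly positive ground state $\Phi_{1,0,1}^{(d)}$. The intrinsic ultracontractivity of the Dirichlet semigroup on a bounded domain \cite{DS,Da1} then gives, for $t$ beyond some threshold $T$,
$$
\mathcal{G}_t^{(d),2}(\mathtt{x},\mathtt{y}) \simeq e^{-t\lambda_{1,d\slash 2-1}^2}\,
	\Phi_{1,0,1}^{(d)}(\mathtt{x})\,\Phi_{1,0,1}^{(d)}(\mathtt{y})
$$
uniformly, the higher terms being negligible thanks to the gap separating $\lambda_{1,d\slash 2-1}^2$ from the rest of the spectrum. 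It remains to note that $\Phi_{1,0,1}^{(d)}(\mathtt{x})\simeq 1-|\mathtt{x}|$: this follows from the explicit form $\phi_1^{d\slash 2-1}(|\mathtt{x}|)$ together with the linear vanishing of $J_{d\slash 2-1}$ at its first zero $\lambda_{1,d\slash 2-1}$ and its non-vanishing in the interior, which upgrades the product above to the stated factor $(1-|\mathtt{x}|)(1-|\mathtt{y}|)$.

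The main obstacle is the short-time boundary behaviour: establishing the sharp factor $\frac{(1-|\mathtt{x}|)(1-|\mathtt{y}|)}{t}\wedge 1$ uniformly and reconciling it with the Gaussian term across all regimes — interior points, one or both points near $S^{d-1}$, and the transition scale $|\mathtt{x}-\mathtt{y}|\simeq\sqrt{t}$. The most delicate case is the lower bound when both points lie close to the boundary, which genuinely requires the boundary Harnack principle rather than interior Harnack alone. Once the linear boundary decay of the ground state is recorded, by contrast, the long-time part reduces to a routine separation of the first eigenvalue.
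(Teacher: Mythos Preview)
Your outline is a reasonable sketch of the ideas behind the cited results, but you should be aware that the paper does not supply a proof of this theorem at all. Theorem~\ref{thm:zhang} is stated as a quoted result, with attribution to \cite{DS,Da1,Z,Song}; the paper merely records that the upper bound in \eqref{hb} and the long-time behaviour \eqref{hbl} go back to Davies--Simon and Davies, while the lower bound in \eqref{hb} is due to Zhang (for $d\ge 3$) and Song (in general). No argument is given in the paper itself.

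Thus there is nothing to compare at the level of proof strategy: your proposal correctly identifies the theorem as a specialization to $B^d$ of the sharp Dirichlet heat kernel estimates on bounded $C^{1,1}$ domains, and your description of the ingredients (free-space Gaussian upper bound, half-space comparison near the boundary, interior Harnack chaining, boundary Harnack for the lower bound, intrinsic ultracontractivity and ground-state asymptotics for large time) matches the content of the cited references. If anything, your write-up goes beyond what the paper does, since the paper treats Theorem~\ref{thm:zhang} as a black box input to the transference argument of Theorem~\ref{ker_con}.
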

The upper bound in \eqref{hb} as well as the large time behavior \eqref{hbl} has been known at least
since the 1980s, cf. Davies and Simon \cite{DS} and Davies \cite{Da1}, see also the related comments
in \cite{Z}. The existence of the lower bound in \eqref{hb} was shown by Zhang \cite[Theorem 1.1]{Z}
under the assumption $d\ge 3$ and with an implicitly fixed $T$. Later these restrictions were
removed by Song \cite[Theorems 3.8 and 3.9]{Song}. Note that the upper estimate in \eqref{hb}
holds in fact for all $t>0$, which is not the case of the lower bound.

Sharp estimates for the subordinated kernels were found recently by Song \cite{Song}.
\begin{theor}[{\cite[Theorem 4.7]{Song}}] \label{thm:song}
Let $d \ge 1$ and $0<\alpha<2$. Given $T>0$, we have
\begin{equation} \label{sb}
\mathcal{G}_t^{(d),\alpha}(\mathtt{x},\mathtt{y}) \simeq
	\bigg[ \frac{(1-|\mathtt{x}|)(1-|\mathtt{y}|)}{t^{2\slash \alpha}+|\mathtt{x}-\mathtt{y}|^2}
		\wedge 1 \bigg] \frac{t}{(t^{2\slash \alpha}+|\mathtt{x}-\mathtt{y}|^2)^{(d+\alpha)\slash 2}},
\end{equation}
uniformly in $\mathtt{x},\mathtt{y} \in B^d$ and $0<t\le T$.
\end{theor}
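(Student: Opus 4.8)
The plan is to exploit the fact that the semigroup $\{\mathcal{T}_t^{(d),\alpha}\}$ is subordinated to the heat semigroup $\{\mathcal{T}_t^{(d),2}\}$, and then to feed the sharp Gaussian bounds of Theorem \ref{thm:zhang} into the subordination formula. Since $e^{-t\mu^{\alpha\slash 2}} = \int_0^\infty e^{-s\mu}\, \eta_t(s)\, ds$ for every $\mu \ge 0$, where $\eta_t$ denotes the density of the $(\alpha\slash 2)$-stable subordinator at time $t$, applying this to each eigenvalue $\mu = \lambda_{n,k+d\slash 2-1}^2$ of $-\Delta$ and summing yields
$$
\mathcal{G}_t^{(d),\alpha}(\mathtt{x},\mathtt{y}) = \int_0^\infty \mathcal{G}_s^{(d),2}(\mathtt{x},\mathtt{y})\, \eta_t(s)\, ds .
$$
Everything then reduces to estimating this integral, inserting the two-sided Gaussian bounds \eqref{hb} (valid for $s \le T$) and the long time behavior \eqref{hbl} (for $s \ge T$).

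Next I would recall the two facts about $\eta_t$ that drive the computation: the scaling identity $\eta_t(s) = t^{-2\slash\alpha}\, \eta_1(s\, t^{-2\slash\alpha})$ and the sharp profile $\eta_t(s) \simeq t\, s^{-1-\alpha\slash 2}$ for $s \gtrsim t^{2\slash\alpha}$, together with super-exponential decay of $\eta_1$ at the origin. Writing $R^2 = t^{2\slash\alpha} + |\mathtt{x}-\mathtt{y}|^2$ for the natural scale, the first step is the boundary-free estimate: replacing the bracket $[\,\cdot \wedge 1\,]$ by $1$, the integral $\int_0^\infty s^{-d\slash 2} e^{-c|\mathtt{x}-\mathtt{y}|^2\slash s}\, \eta_t(s)\, ds$ is comparable to $t\,(t^{2\slash\alpha}+|\mathtt{x}-\mathtt{y}|^2)^{-(d+\alpha)\slash 2}$, the classical bound for the free isotropic $\alpha$-stable kernel. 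Here the mass of the integrand concentrates at $s \simeq R^2$: for $|\mathtt{x}-\mathtt{y}|^2 \lesssim t^{2\slash\alpha}$ the subordinator scale $s \simeq t^{2\slash\alpha}$ governs and gives $t^{-d\slash\alpha}$, whereas for $|\mathtt{x}-\mathtt{y}|^2 \gtrsim t^{2\slash\alpha}$ the polynomial tail $t\, s^{-1-\alpha\slash 2}$ evaluated at $s \simeq |\mathtt{x}-\mathtt{y}|^2$ gives $t\,|\mathtt{x}-\mathtt{y}|^{-d-\alpha}$.

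It then remains to track the boundary factor. Put $\delta = (1-|\mathtt{x}|)(1-|\mathtt{y}|)$ and note that $\delta\slash s \wedge 1$ equals $1$ for $s \le \delta$ and $\delta\slash s$ for $s \ge \delta$. Since the integrand concentrates at $s \simeq R^2$, on that window $\delta\slash s \wedge 1 \simeq \delta\slash R^2 \wedge 1$, so the boundary bracket effectively factors out of the $s$-integral and becomes $[(1-|\mathtt{x}|)(1-|\mathtt{y}|)\slash(t^{2\slash\alpha}+|\mathtt{x}-\mathtt{y}|^2)\wedge 1]$, exactly the prefactor in \eqref{sb}. To make this rigorous I would split the $s$-integral at the thresholds $s \simeq \delta$ and $s \simeq R^2$ and compare the pieces, distinguishing the regimes $\delta \gtrsim R^2$ (bracket $\simeq 1$) and $\delta \lesssim R^2$ (bracket $\simeq \delta\slash R^2$); in the second case the extra factor $\delta\slash s$ on $s \ge \delta$ contributes precisely $\delta\slash R^2$ across the bulk, while the region $s \le \delta$ is suppressed either by the super-exponential decay of $\eta_t$ (when $\delta \lesssim t^{2\slash\alpha}$) or by the Gaussian factor (when $R^2 \simeq |\mathtt{x}-\mathtt{y}|^2$).

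The main obstacle is the lower bound. The Gaussian upper estimate in \eqref{hb} holds for all $s>0$, so inserting it and carrying out the above estimates yields the upper bound in \eqref{sb} directly. The lower bound, however, relies on the two-sided Gaussian estimate, which Theorem \ref{thm:zhang} guarantees only for $0<s\le T$. This is reconciled using the hypotheses $0<t\le T$ and $|\mathtt{x}-\mathtt{y}| \le 2$: the scale $R^2$ is then bounded, so choosing $T$ large enough one can confine the comparison window $s \in [c_1 R^2, c_2 R^2]$ to $(0,T]$, where both the Gaussian lower bound and $\eta_t(s) \simeq t\,s^{-1-\alpha\slash 2}$ are available, and restrict the integral to this window to recover the matching lower estimate; the contribution of $s>T$, controlled through \eqref{hbl} and the subordinator tail, is of no larger order. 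The genuinely delicate part throughout is the bookkeeping of the three scales $\delta$, $t^{2\slash\alpha}$ and $|\mathtt{x}-\mathtt{y}|^2$ inside the minimum, ensuring that both bounds saturate at $\delta\slash R^2 \wedge 1$ uniformly in $\mathtt{x},\mathtt{y}$ and $t$.
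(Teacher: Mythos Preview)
The paper does not prove Theorem~\ref{thm:song}: it is quoted verbatim from \cite[Theorem~4.7]{Song} and used as a black box, with no argument supplied. So there is no ``paper's own proof'' to compare against.

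That said, your subordination sketch is the right strategy and is, in outline, what Song actually does. The representation $\mathcal{G}_t^{(d),\alpha}=\int_0^\infty \mathcal{G}_s^{(d),2}\,\eta_t(s)\,ds$ together with the scaling $\eta_t(s)=t^{-2/\alpha}\eta_1(st^{-2/\alpha})$, the tail $\eta_t(s)\simeq t\,s^{-1-\alpha/2}$ for $s\gtrsim t^{2/\alpha}$, and the small-$s$ decay of $\eta_1$ are exactly the ingredients needed; your observation that the $s$-mass concentrates at $s\simeq R^2:=t^{2/\alpha}+|\mathtt{x}-\mathtt{y}|^2$, so that the boundary bracket $[\delta/s\wedge 1]$ freezes to $[\delta/R^2\wedge 1]$, is the heart of the matter. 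One small clarification: in your lower-bound paragraph you write ``choosing $T$ large enough'', but the $T$ in the statement is \emph{given}. What you should say is that, since $R^2\le T^{2/\alpha}+4$ is bounded, you invoke Theorem~\ref{thm:zhang} with its own (freely chosen) time horizon $T'$ taken large enough that $[c_1R^2,c_2R^2]\subset(0,T']$; the two $T$'s play different roles and should be kept separate. With that adjustment the outline is sound, though of course turning the heuristic ``the bracket factors out'' into genuine two-sided bounds across all regimes of $(\delta,\,t^{2/\alpha},\,|\mathtt{x}-\mathtt{y}|^2)$ requires the careful case splitting you allude to at the end.
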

Similarly as in \eqref{hb}, the upper bound in \eqref{sb} holds in fact for all $t>0$, which is
not the case of the lower bound.
The expected large time behavior for $0<\alpha<2$ is of course
\begin{equation*}
\mathcal{G}_t^{(d),\alpha}(\mathtt{x},\mathtt{y}) \simeq (1-|\mathtt{x}|)(1-|\mathtt{y}|)
	\exp\big({-t \lambda_{1,d\slash 2-1}^{\alpha}}\big), \qquad \mathtt{x},\mathtt{y} \in B^d, \quad t\ge T,
\end{equation*}
with $T$ chosen sufficiently large.
These bounds can be proved by a direct analysis
of the series defining $\mathcal{G}_t^{(d),\alpha}(\mathtt{x},\mathtt{y})$, similar to that in the proof
of Theorem \ref{thm:large} below. This, however, is beyond the scope of this paper.

We now transfer the above bounds for $\mathcal{G}_t^{(d),\alpha}(\mathtt{x},\mathtt{y})$ to the
Fourier-Bessel setting on the interval $(0,1)$. More precisely, we shall prove the following.
\begin{theor} \label{thm:main}
Let $\nu = d\slash 2-1$ for some $d \ge 1$. Given $T>0$, there exists a constant $c>1$ such that
\begin{align*}
& (xy)^{-\nu-1\slash 2} \bigg(\frac{xy}{t}\wedge 1\bigg)^{\nu+1\slash 2}
	\bigg[ \frac{(1-x)(1-y)}{t} \wedge 1\bigg] \frac{1}{\sqrt{t}}\exp\bigg({-c\frac{(x-y)^2}{t}}\bigg) \\
& \qquad \lesssim G_t^{\nu,2}(x,y) \lesssim	
	(xy)^{-\nu-1\slash 2} \bigg(\frac{xy}{t}\wedge 1\bigg)^{\nu+1\slash 2}
	\bigg[ \frac{(1-x)(1-y)}{t} \wedge 1\bigg] \frac{1}{\sqrt{t}} \exp\bigg({-\frac{(x-y)^2}{ct}}\bigg),
\end{align*}
uniformly in $x,y \in (0,1)$ and $0 < t \le T$. Moreover, for sufficiently large $T$ we have
$$
G_t^{\nu,2}(x,y) \simeq (1-x)(1-y) \exp\big({-t\lambda_{1,\nu}^2}\big),
	\qquad x,y \in (0,1), \quad t \ge T.
$$
\end{theor}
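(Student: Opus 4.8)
The plan is to transfer the sharp ball estimates of Theorem~\ref{thm:zhang} through the integral identity of Theorem~\ref{ker_con}, and then to resolve the resulting spherical average by an elementary Laplace-type estimate. First I would fix $\mathtt{x}=(x,0,\ldots,0)$, so that $|\mathtt{x}|=x$ and $|y\xi|=y$ for every $\xi\in S^{d-1}$, and write
$$
G_t^{\nu,2}(x,y)=\int_{S^{d-1}}\mathcal{G}_t^{(d),2}(\mathtt{x},y\xi)\,d\sigma(\xi).
$$
Inserting the two-sided bound \eqref{hb}, the boundary factor $(1-x)(1-y)$ and the prefactor $t^{-d\slash 2}$ are independent of $\xi$ and pull out of the integral; only the Gaussian exponent depends on $\xi$, through $|\mathtt{x}-y\xi|^2=x^2+y^2-2xy\langle e_1,\xi\rangle$. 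Parametrizing the sphere by the polar angle $\theta\in[0,\pi]$ measured from $e_1$, one has $d\sigma=\omega_{d-2}\sin^{d-2}\theta\,d\theta$ with $\omega_{d-2}=\sigma(S^{d-2})$, and since $d-2=2\nu$ the whole matter reduces to the one-dimensional integral $\mathcal{I}(z)=\int_0^\pi e^{z\cos\theta}\sin^{2\nu}\theta\,d\theta$ evaluated at $z=2xy\slash(ct)$. (The degenerate case $d=1$, $\nu=-1\slash 2$, where $S^0$ is two points, reduces to a two-term sum and is anyway covered by the explicit computations of Section~\ref{sec:rem}.)

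The heart of the argument is the uniform two-sided estimate
$$
\mathcal{I}(z)\simeq e^{z}\,(1+z)^{-\nu-1\slash 2},\qquad z>0,
$$
which I would prove by splitting the integral near $\theta=0$, where the exponent is maximal: for $z\lesssim 1$ both $e^{z\cos\theta}$ and $e^z$ are comparable to $1$, so $\mathcal{I}(z)\simeq\int_0^\pi\sin^{2\nu}\theta\,d\theta$, while for $z\gtrsim 1$ the substitution $u=\sqrt{z}\,\theta$ together with $\cos\theta=1-\theta^2\slash 2+O(\theta^4)$ and $\sin\theta\simeq\theta$ gives $\mathcal{I}(z)\simeq e^{z}z^{-(2\nu+1)\slash 2}$. (Equivalently one identifies $\mathcal{I}(z)$ with the modified Bessel function $I_\nu(z)$ and quotes its large- and small-argument asymptotics.) Feeding this back, the factor $e^{z}=\exp(2xy\slash(ct))$ combines with $\exp(-(x^2+y^2)\slash(ct))$ to produce exactly $\exp(-(x-y)^2\slash(ct))$, while $(1+z)^{-\nu-1\slash 2}\simeq(1+xy\slash t)^{-\nu-1\slash 2}\simeq(xy\slash t\vee 1)^{-\nu-1\slash 2}$; multiplying by $t^{-d\slash 2}=t^{-\nu-1}$ and distinguishing the cases $xy\gtrless t$ then shows the prefactor equals the claimed $(xy)^{-\nu-1\slash 2}(xy\slash t\wedge 1)^{\nu+1\slash 2}t^{-1\slash 2}$ up to constants. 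The lower bound is obtained by the identical computation from the lower half of \eqref{hb}, the only change being the value of $c$, which is harmless since $\mathcal{I}$ is controlled from both sides; note that the residual factor $e^{\pm z}$ left over when $z\lesssim 1$ stays bounded away from $0$ and $\infty$ precisely because $z\lesssim 1$ there.

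Finally, the large-time statement follows at once from the same transference applied to \eqref{hbl}: evaluated at $\mathtt{x}=(x,0,\ldots,0)$ and $y\xi$ the right-hand side of \eqref{hbl} equals $(1-x)(1-y)\exp(-t\lambda_{1,d\slash 2-1}^2)$ and is independent of $\xi$, so integrating over $S^{d-1}$ merely multiplies by $\sigma(S^{d-1})$ and yields $G_t^{\nu,2}(x,y)\simeq(1-x)(1-y)\exp(-t\lambda_{1,\nu}^2)$. I expect the only genuinely delicate point to be the uniform-in-$z$ control of $\mathcal{I}(z)$ across the transition $z\simeq 1$, i.e.\ the regime $xy\simeq t$, since it is this single estimate that simultaneously manufactures the sharp power $(xy\slash t\wedge 1)^{\nu+1\slash 2}$ and converts the Gaussian from $x^2+y^2$ into $(x-y)^2$; everything else is bookkeeping.
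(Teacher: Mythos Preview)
Your proposal is correct and follows essentially the same route as the paper: transfer the ball bounds \eqref{hb} and \eqref{hbl} through Theorem~\ref{ker_con}, reduce the spherical average to a one-parameter integral, and read off the result from its small- and large-argument behavior. The only cosmetic difference is that the paper identifies your $\mathcal{I}(z)$ exactly with $I_{\nu}$ via Schl\"afli's integral (Lemma~\ref{lem:bes}) and then quotes the standard asymptotics, whereas you estimate $\mathcal{I}(z)$ directly by Laplace's method---an alternative you yourself mention.
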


\begin{theor} \label{thm:poisson}
Let $\alpha \in (0,2)$ and $\nu=d\slash 2-1$ for some $d \ge 1$. Given $T>0$, we have
\begin{align*}
& G_t^{\nu,\alpha}(x,y)\\ & \qquad\simeq (xy)^{-\nu-1\slash 2}
	\bigg( \frac{xy}{t^{2\slash \alpha}+(x-y)^2} \wedge 1\bigg)^{\nu+1\slash 2}
	\bigg[ \frac{(1-x)(1-y)}{t^{2\slash \alpha}+(x-y)^2} \wedge 1\bigg]
	\frac{t}{(t^{2\slash \alpha}+(x-y)^2)^{(\alpha+1)\slash 2}},
\end{align*}
uniformly in $x,y \in (0,1)$ and $0 < t \le T$.
\end{theor}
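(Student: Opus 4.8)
The plan is to derive Theorem~\ref{thm:poisson} from the ball estimates in Theorem~\ref{thm:song} via the kernel identity of Theorem~\ref{ker_con}, in exactly the same spirit as one proves Theorem~\ref{thm:main} from Theorem~\ref{thm:zhang}. First I would write
$$
G_t^{\nu,\alpha}(x,y) = \int_{S^{d-1}} \mathcal{G}_t^{(d),\alpha}(\mathtt{x},y\xi)\, d\sigma(\xi),
\qquad \mathtt{x}=(x,0,\ldots,0),
$$
and insert the two-sided bound \eqref{sb} for the integrand. Since $|\mathtt{x}|=x$ and $|y\xi|=y$, the boundary factors $(1-|\mathtt{x}|)(1-|y\xi|)=(1-x)(1-y)$ and the subordination factor $t\slash(t^{2\slash\alpha}+|\mathtt{x}-y\xi|^2)^{(d+\alpha)\slash2}$ both depend on $\xi$ only through the Euclidean distance $|\mathtt{x}-y\xi|$. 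Thus the whole problem reduces to estimating the surface integral
$$
I_t(x,y) := \int_{S^{d-1}}
	\Big[\tfrac{(1-x)(1-y)}{t^{2\slash\alpha}+|\mathtt{x}-y\xi|^2}\wedge 1\Big]
	\frac{t}{(t^{2\slash\alpha}+|\mathtt{x}-y\xi|^2)^{(d+\alpha)\slash2}}\, d\sigma(\xi),
$$
and the claimed right-hand side of Theorem~\ref{thm:poisson} should be $I_t(x,y)$ up to the harmless constant $c_d^{-2}$ and the overall deformation of the constant $c$ hidden in $\simeq$.

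The geometric heart of the matter is the elementary identity $|\mathtt{x}-y\xi|^2 = (x-y)^2 + 2xy(1-\cos\theta)$, where $\theta\in[0,\pi]$ is the angle between $\mathtt{x}$ and $\xi$. Parametrizing $S^{d-1}$ by this polar angle gives $d\sigma(\xi) = \sigma(S^{d-2})(\sin\theta)^{d-2}\,d\theta$ for $d\ge2$ (the cases $d=1,2$ being handled separately, $d=1$ trivially and $d=2$ with the usual logarithmic-free direct computation). Writing $s := t^{2\slash\alpha}+(x-y)^2$ for brevity, the quantity $t^{2\slash\alpha}+|\mathtt{x}-y\xi|^2 = s + 2xy(1-\cos\theta)$ is increasing in $\theta$, ranging from $s$ (at $\theta=0$) up to $s+4xy$ (at $\theta=\pi$). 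I would then estimate $I_t(x,y)$ by splitting the $\theta$-integral according to whether $2xy(1-\cos\theta)$ is smaller or larger than $s$, i.e.\ whether $\xi$ is near the pole $\mathtt{x}\slash|\mathtt{x}|$ or far from it, and use $1-\cos\theta\simeq\theta^2$ near $0$ together with $\sin\theta\simeq\theta$. This is the standard technique for passing from a product (ball) kernel to its radial average, and the bookkeeping produces precisely the factor $(xy\slash s\wedge1)^{\nu+1\slash2}$ with $\nu+1\slash2 = (d-1)\slash2$, matching the exponent in the statement.

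The main obstacle I anticipate is carrying out this split-integral estimate so that both the upper and lower bounds come out sharp simultaneously, and so that the two regimes $xy\gtrsim s$ and $xy\lesssim s$ of the $(\cdot\wedge1)$ truncation combine into the single clean expression in the theorem. Concretely, in the regime $xy\lesssim s$ the distance $|\mathtt{x}-y\xi|^2\simeq s$ is essentially constant across the whole sphere, so $I_t(x,y)\simeq \sigma(S^{d-1})\cdot[\tfrac{(1-x)(1-y)}{s}\wedge1]\, t\, s^{-(d+\alpha)\slash2}$, and one checks that the prefactor $(xy\slash s)^{(d-1)\slash2}$ is then $\gtrsim1$-up-to-constants, absorbing correctly; whereas in the regime $xy\gtrsim s$ the dominant contribution comes from a $\theta$-window of size $\sim\sqrt{s\slash(xy)}$ near the pole, and the resulting factor $(\sqrt{s\slash(xy)})^{d-1}$ inverts to $(xy\slash s)^{-(d-1)\slash2}$, again reproducing the advertised power. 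Keeping the exponents and the boundary truncation consistent across these two cases—while verifying that the lower bound survives the restriction $0<t\le T$ inherited from Theorem~\ref{thm:song}—is the delicate part; everything else is the same change-of-variables and elementary integral asymptotics already used in the proof of Theorem~\ref{thm:main}, the only genuine difference being that the subordinated profile $s^{-(d+\alpha)\slash2}$ replaces the Gaussian and carries no rapid decay, so the far-field portion of the sphere must be controlled by polynomial, not exponential, means.
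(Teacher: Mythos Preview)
Your overall strategy---transfer via Theorem~\ref{ker_con} and Theorem~\ref{thm:song}, then reduce to a one-dimensional integral over the sphere via the zonal structure---is exactly the paper's. The execution differs. The paper replaces each truncation $z\wedge 1$ by $z/(z+1)$, obtaining a purely rational integrand, and then parametrizes by $\xi_1\in[-1,1]$ with the Jacobian $(1-\xi_1^2)^{(d-3)/2}$. This reduces everything to the single technical Lemma~\ref{lem:int}, which evaluates $\int_{-1}^1 (1-s^2)^{\eta}(D-Bs)^{-1}(A-Bs)^{-\gamma}\,ds$ sharply by case analysis on $A,B,D$; here $A=t^{2/\alpha}+x^2+y^2$, $B=2xy$, $D=A+(1-x)(1-y)$. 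Your approach keeps the $\wedge 1$ truncations and splits by the angular window $\theta\lesssim\sqrt{s/(xy)}$ instead. This is workable, but note that the boundary factor $\big[\tfrac{(1-x)(1-y)}{s+2xy(1-\cos\theta)}\wedge1\big]$ also varies with $\theta$, so in the regime $xy\gtrsim s$ you will need a further sub-split according to the size of $(1-x)(1-y)$ relative to $s$; this is precisely what the third parameter $D$ encodes in the paper's lemma. The paper's rewriting isolates all casework into one reusable estimate and avoids special treatment of $d=2$ (Lemma~\ref{lem:int} applies with $\eta=-1/2$); your direct splitting is more hands-on but would need the cases spelled out to match both bounds simultaneously. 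Also, your remark that in the regime $xy\lesssim s$ ``the prefactor $(xy/s)^{(d-1)/2}$ is then $\gtrsim1$'' is garbled---in that regime $(xy/s)^{(d-1)/2}\lesssim1$ and the point is rather that $(xy)^{-\nu-1/2}(xy/s)^{\nu+1/2}=s^{-\nu-1/2}$ combines with $s^{-(\alpha+1)/2}$ to give $s^{-(d+\alpha)/2}$, matching your sphere computation.
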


At the end of this section we will complement these results by deriving, directly and
for all $\nu >-1$, sharp large time bounds for the kernels $G_t^{\nu,\alpha}(x,y)$, $0 < \alpha \le 2$.

To prove Theorem \ref{thm:main} we need the following.
\begin{lemma} \label{lem:bes}
Let $d \ge 1$ and $x,y,t,c>0$ be fixed. Then
$$
\int_{S^{d-1}} \exp\bigg({-\frac{|\mathtt{x}-y\xi|^2}{ct}}\bigg)\, d\sigma(\xi) = (2\pi)^{\nu+1}
	 \exp\bigg({-\frac{x^2+y^2}{ct}}\bigg) \Big(\frac{ct}{2xy}\Big)^{\nu} I_{\nu}\Big(\frac{2xy}{ct}\Big),
$$
where $\mathtt{x} = (x,0,\ldots,0)\in \mathbb{R}^d$, $\nu=d\slash 2-1$ and $I_{\nu}$ denotes the
modified Bessel function of the first kind and order $\nu$.
\end{lemma}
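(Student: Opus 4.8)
The plan is to reduce the surface integral to a one-dimensional integral over the first coordinate and then recognize a Poisson-type integral representation of the modified Bessel function $I_{\nu}$. First I would expand the squared distance. Since $\mathtt{x}=(x,0,\ldots,0)$ has $|\mathtt{x}|=x$ and $|\xi|=1$ for $\xi \in S^{d-1}$, writing $\xi_1$ for the first coordinate of $\xi$ gives $|\mathtt{x}-y\xi|^2 = x^2 - 2xy\,\xi_1 + y^2$. Hence the factor $\exp(-(x^2+y^2)\slash(ct))$ pulls out of the integral, and it remains to evaluate
$$
\int_{S^{d-1}} \exp\Big(\frac{2xy}{ct}\,\xi_1\Big)\, d\sigma(\xi).
$$

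Next, for $d \ge 2$ I would collapse this to an integral in the single variable $s=\xi_1$. By the standard slicing formula for the surface measure, $\int_{S^{d-1}} g(\xi_1)\,d\sigma(\xi) = |S^{d-2}| \int_{-1}^{1} g(s)(1-s^2)^{(d-3)\slash 2}\,ds$, where $|S^{d-2}|$ denotes the surface area of $S^{d-2}$. Since $\nu=d\slash 2-1$ we have $(d-3)\slash 2 = \nu-1\slash 2$, so the exponent of $1-s^2$ is precisely the one occurring in the Poisson representation
$$
I_{\nu}(z) = \frac{(z\slash 2)^{\nu}}{\sqrt{\pi}\,\Gamma(\nu+1\slash 2)} \int_{-1}^{1} e^{zs}(1-s^2)^{\nu-1\slash 2}\,ds, \qquad \nu>-1\slash 2.
$$
Applying this with $z=2xy\slash(ct)$ converts the remaining integral into the desired Bessel function.

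Finally I would clean up the constants. Using $|S^{d-2}| = 2\pi^{\nu+1\slash 2}\slash\Gamma(\nu+1\slash 2)$ (valid for $d\ge 2$, since $(d-1)\slash 2=\nu+1\slash 2$), the prefactor collapses to $|S^{d-2}|\sqrt{\pi}\,\Gamma(\nu+1\slash 2) = 2\pi^{\nu+1}$, and the elementary identity $2\pi^{\nu+1}(ct\slash(xy))^{\nu} = (2\pi)^{\nu+1}(ct\slash(2xy))^{\nu}$ puts the result in exactly the stated form. The case $d=1$ (where $\nu=-1\slash 2$) is degenerate and must be checked by hand: there $S^0=\{-1,1\}$ carries counting measure, so the left-hand side is simply $\exp(-(x-y)^2\slash(ct)) + \exp(-(x+y)^2\slash(ct))$, and this matches the right-hand side upon inserting the closed form $I_{-1\slash 2}(z)=\sqrt{2\slash(\pi z)}\,\cosh z$ and simplifying.

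No single step is a genuine obstacle; the whole lemma is a bookkeeping exercise built on one classical integral formula. The only point requiring care is the accurate tracking of the $\Gamma$ and $\pi$ factors arising from $|S^{d-2}|$ and from the normalization of the Poisson representation, together with the separate treatment of the degenerate one-dimensional case.
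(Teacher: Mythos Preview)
Your proposal is correct and follows essentially the same route as the paper: expand $|\mathtt{x}-y\xi|^2$, reduce the zonal integrand on $S^{d-1}$ to a one-dimensional integral via the slicing formula, identify this integral with Schl\"afli's Poisson-type representation of $I_{\nu}$, and treat $d=1$ separately using $I_{-1/2}(z)=\sqrt{2/(\pi z)}\,\cosh z$. Your constant bookkeeping is accurate and slightly more explicit than the paper's, but the arguments are otherwise identical.
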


\begin{proof}
We first deal with the case $d \ge 2$. Observe that for $\xi \in S^{d-1}$
$$
\exp\bigg({-\frac{|\mathtt{x}-y\xi|^2}{ct}}\bigg) = \exp\bigg({-\frac{x^2+y^2}{ct}}\bigg)
	\exp\Big({\frac{2xy\xi_1}{ct}}\Big).
$$
Thus the integrand is a zonal function depending only on $\xi_1$. In this case the integration over
$S^{d-1}$ reduces to a one-dimensional integral against $d\xi_1$, see for instance the proof
of \cite[Chapter IV, Corollary 2.16]{SW}. Precisely, we have
$$
\int_{S^{d-1}} \exp\Big({\frac{2xy\xi_1}{ct}}\Big) \, d\sigma(\xi)
	= \sigma(S^{d-2}) \int_{-1}^1 \exp\Big({\frac{2xy}{ct} \xi_1}\Big) (1-\xi_1^2)^{(d-3)\slash 2}\, d\xi_1,
$$
where $\sigma(S^{d-2})=2\pi^{(d-1)\slash 2}\slash \Gamma((d-1)\slash 2)$ is the surface area
measure of $S^{d-2}$. To evaluate the last integral we invoke Schl\"afli's Poisson type representation
for the Bessel function $I_{\nu}$ (cf. \cite[Chapter III, Section 3$\cdot$71, (9)]{Wat})
$$
I_{\nu}(z) = \frac{1}{\sqrt{\pi}2^{\nu}\Gamma(\nu+1\slash 2)} z^{\nu} \int_{-1}^1 \exp(zs)
	(1-s^2)^{\nu-1\slash 2}\, ds, \qquad \nu > -1\slash 2.
$$
Putting now all the facts together we arrive at the desired identity.

When $d=1$ the identity is verified directly, taking into account that
(cf. \cite[Chapter III, Section 3$\cdot$71, (10)]{Wat})
$I_{-1\slash 2}(z)=(2\slash (\pi z))^{1\slash 2} \cosh z$.
\end{proof}

\begin{proof}[Proof of Theorem \ref{thm:main}]
The large time behavior is a consequence of Theorem \ref{ker_con} and Theorem
\ref{thm:zhang}. To show the remaining estimates we use in addition Lemma \ref{lem:bes}.
This produces the bounds
\begin{align*}
& (xy)^{-\nu}\bigg[ \frac{(1-x)(1-y)}{t} \wedge 1\bigg] \frac{1}{t} \exp\bigg({-c\frac{x^2+y^2}{t}}\bigg)
	I_{\nu}\Big(\frac{2xy}{ct}\Big)\\ & \qquad \lesssim G_t^{\nu,2}(x,y) \lesssim
	(xy)^{-\nu}\bigg[ \frac{(1-x)(1-y)}{t} \wedge 1\bigg] \frac{1}{t} \exp\bigg({-\frac{x^2+y^2}{ct}}\bigg)
	I_{\nu}\Big(\frac{2xy}{ct}\Big).
\end{align*}
Now the conclusion follows by applying the standard asymptotics for $I_{\nu}$, $\nu > -1$,
$$
I_{\nu}(z) \simeq  z^{\nu}, \qquad z \to 0^+, \qquad \qquad I_{\nu}(z) \simeq z^{-1\slash 2} \exp({z}),
	\qquad z \to \infty,
$$
together with the fact that $I_{\nu}$ is continuous.
\end{proof}

In order to prove Theorem \ref{thm:poisson} we first analyze the behavior of a one-dimensional integral depending
on several parameters.
\begin{lemma} \label{lem:int}
Let $\gamma$ and $\eta$ be such that $\gamma>\eta+1>0$. Then
$$
\int_{-1}^1 \frac{(1-s^2)^{\eta}\, ds}{(D-Bs)(A-Bs)^{\gamma}} \simeq
	\frac{1}{(D-B)A^{\eta+1}(A-B)^{\gamma-\eta-1}}, \qquad 0<B<A<D.
$$
\end{lemma}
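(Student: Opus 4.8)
The plan is to reduce the estimate to a one-parameter model integral by exploiting the affine structure of the two denominators. First I would substitute $u = 1-s$, so that $1-s^2 = u(2-u)$, $A-Bs = B(r+u)$ and $D-Bs = B(\rho+u)$ with $r = (A-B)/B$ and $\rho = (D-B)/B$; note $0 < r < \rho$ and $1+r = A/B$. Pulling the powers of $B$ out of the integral turns the claimed equivalence into
$$
J := \int_0^2 \frac{(u(2-u))^{\eta}}{(\rho+u)(r+u)^{\gamma}}\, du \simeq \frac{1}{\rho\,(1+r)^{\eta+1}\,r^{\gamma-\eta-1}},
$$
which is the statement I would actually prove. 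Throughout I would use that $(u(2-u))^{\eta} \simeq u^{\eta}$ on $[0,1]$ and $(u(2-u))^{\eta}\simeq (2-u)^{\eta}$ on $[1,2]$.

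For the upper bound I would simply discard the $u$ in the first denominator, writing $\rho+u \ge \rho$, to get $J \le \rho^{-1} K$ with
$$
K := \int_0^2 \frac{(u(2-u))^{\eta}}{(r+u)^{\gamma}}\, du.
$$
The core of the argument is then the estimate $K \simeq (1+r)^{-\eta-1} r^{-(\gamma-\eta-1)}$, which I would obtain by distinguishing $r \le 1$ and $r > 1$. When $r > 1$ one has $r+u \simeq r$ and $1+r \simeq r$ on all of $[0,2]$, so $K \simeq r^{-\gamma}\int_0^2 (u(2-u))^{\eta}\,du \simeq r^{-\gamma}$, the last integral being a finite positive constant because $\eta > -1$. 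When $r \le 1$ the main contribution comes from $u \in [0,1]$, and the scaling $u = rv$ gives $\int_0^1 u^{\eta}(r+u)^{-\gamma}\,du = r^{\eta+1-\gamma}\int_0^{1/r} v^{\eta}(1+v)^{-\gamma}\,dv$, where the last integral is comparable to the convergent model integral $\int_0^{\infty} v^{\eta}(1+v)^{-\gamma}\,dv$, which is finite and nonzero exactly because $\eta+1>0$ and $\gamma-\eta-1>0$. The residual piece over $[1,2]$ contributes only a bounded constant, dominated by $r^{\eta+1-\gamma}$, and altogether $K \simeq r^{-(\gamma-\eta-1)}$.

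For the matching lower bound I would restrict the integration to a subinterval on which the first denominator is harmless, i.e. where $\rho+u \simeq \rho$; the hypothesis $\rho > r$ guarantees this is possible. If $r \le 1$, integrating only over $u \in [0,r]$ — where $\rho+u \le 2\rho$, $2-u \simeq 1$ and $r+u \simeq r$ — yields $J \gtrsim (\rho r^{\gamma})^{-1}\int_0^r u^{\eta}\,du \simeq (\rho\, r^{\gamma-\eta-1})^{-1}$. If $r > 1$, integrating over the fixed interval $u \in [\tfrac12,1]$ — where $u(2-u) \simeq 1$, $r+u \simeq r$ and $\rho+u \simeq \rho$ — gives $J \gtrsim (\rho r^{\gamma})^{-1}$. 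In both cases this reproduces the claimed lower bound after rewriting $(1+r)^{\eta+1}$ as $1$ or as $r^{\eta+1}$ accordingly. The main obstacle, and the only real subtlety, is that the integrand carries several competing scales — $r$, $\rho$ and the fixed endpoints $0,1,2$ — so the proof hinges on correctly locating where the mass concentrates. The decisive point is that, thanks to $\gamma > \eta+1 > 0$, the profile $v^{\eta}(1+v)^{-\gamma}$ is integrable at both ends, which pins the dominant contribution to the scale $u \sim r$; this simultaneously forces $\rho+u \simeq \rho$ there and makes the crude bound $\rho+u \ge \rho$ tight. Keeping the dichotomy $r \lesssim 1$ versus $r \gtrsim 1$ clean, and checking that the discarded regions are genuinely negligible, is where the bookkeeping must be done with care.
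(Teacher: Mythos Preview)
Your argument is correct and rests on the same mechanism as the paper's proof: the substitution $u=1-s$, the identification of the critical scale $r=(A-B)/B$, and the dichotomy $r\lesssim 1$ versus $r\gtrsim 1$. The paper first reduces the range to $s\in[0,1]$ (you keep $u\in[0,2]$ and handle the tail piece $[1,2]$ separately, which is equivalent) and then splits into three cases according to whether $A\ge 2B$ and $D\ge 2B$; your device of bounding $\rho+u\ge\rho$ uniformly in the upper estimate and restricting to $u\le r$ (hence $u<\rho$) in the lower estimate collapses the paper's Cases~2 and~3 into one, so you need only two cases. This is a mild streamlining of the same proof rather than a different route.
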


\begin{proof}
Observe that
$$
\int_{-1}^1 \frac{(1-s^2)^{\eta}\, ds}{(D-Bs)(A-Bs)^{\gamma}} \simeq
\int_{0}^1 \frac{(1-s^2)^{\eta}\, ds}{(D-Bs)(A-Bs)^{\gamma}} \simeq
\int_{0}^1 \frac{(1-s)^{\eta}\, ds}{(D-Bs)(A-Bs)^{\gamma}},
$$
so it is enough to analyze the last integral, which we further denote by $\mathcal{I}$.
It is convenient to consider the following three cases. Altogether, they cover all admissible
configurations of $A,B,D$.
\newline {\bf Case 1:} $\mathbf{D>A\ge 2B.}$ Since in this case $A-Bs \simeq A \simeq A-B$ and
$D-Bs \simeq D \simeq D-B$, for $s \in (0,1)$, the conclusion is trivial.
\newline {\bf Case 2:} $\mathbf{D\ge 2B> A > B.}$
Now $D-Bs \simeq D \simeq D-B$ for $s \in (0,1)$ and therefore
$$
\mathcal{I} \simeq \frac{1}{D-B}\int_0^1 \frac{(1-s)^{\eta}\, ds}{(A-Bs)^{\gamma}}.
$$
Changing the variable $u = 1-s$ we get
$$
\mathcal{I} \simeq \frac{1}{D-B}\int_0^1 \frac{u^{\eta}\, du}{(A-B + Bu)^{\gamma}}.
$$
Splitting the last integral at the point $u=(A-B)\slash B$ and estimating the resulting integrals
separately we obtain
$$
\int_0^{(A-B)\slash B} \frac{u^{\eta}\, du}{(A-B+Bu)^{\gamma}} \simeq
	\frac{1}{(A-B)^{\gamma}} \int_0^{(A-B)\slash B} u^{\eta}\, du \simeq
	\frac{1}{A^{\eta+1}(A-B)^{\gamma-\eta-1}}
$$
and
$$
\int_{(A-B)\slash B}^1 \frac{u^{\eta}\, du}{(A-B+Bu)^{\gamma}} \lesssim \frac{1}{B^{\gamma}}
\int_{(A-B)\slash B}^1 u^{\eta-\gamma}\, du \lesssim \frac{1}{B^{\gamma}}
	\Big(\frac{A-B}{B}\Big)^{\eta-\gamma+1} \simeq \frac{1}{A^{\eta+1}(A-B)^{\gamma-\eta-1}}.
$$
This implies the desired conclusion.
\newline {\bf Case 3:} $\mathbf{2B>D> A > B.}$
Changing the variable $u=1-s$ we get
$$
\mathcal{I} = \int_0^1 \frac{u^{\eta}\, du}{(D-B+Bu)(A-B+Bu)^{\gamma}}.
$$
We now split the above integral at the points $u_1=(A-B)\slash B$ and $u_2=(D-B)\slash B$ and
estimate the resulting integrals separately. We have
\begin{align*}
\int_0^{(A-B)\slash B} \frac{u^{\eta}\, du}{(D-B+Bu)(A-B+Bu)^{\gamma}} & \simeq
	\frac{1}{(D-B)(A-B)^{\gamma}} \int_0^{(A-B)\slash B} u^{\eta}\, du \\ & \simeq
	\frac{1}{(D-B)A^{\eta+1}(A-B)^{\gamma-\eta-1}}.
\end{align*}
Further,
\begin{align*}
\int_{(A-B)\slash B}^{(D-B)\slash B} \frac{u^{\eta}\, du}{(D-B+Bu)(A-B+Bu)^{\gamma}} & \lesssim
	\frac{1}{(D-B)B^{\gamma}} \int_{(A-B)\slash B}^{(D-B)\slash B} u^{\eta-\gamma}\, du \\
	& \lesssim \frac{1}{(D-B)B^{\gamma}}\Big(\frac{A-B}{B}\Big)^{\eta-\gamma+1} \\
	& \simeq \frac{1}{(D-B)A^{\eta+1}(A-B)^{\gamma-\eta-1}}.
\end{align*}
Finally,
\begin{align*}
\int_{(D-B)\slash B}^1 \frac{u^{\eta}\, du}{(D-B+Bu)(A-B+Bu)^{\gamma}} & \lesssim
	\frac{1}{B^{\gamma+1}} \int_{(D-B)\slash B}^1 u^{\eta-\gamma-1}\, du \\
& \lesssim \frac{1}{B^{\gamma+1}} \Big(\frac{D-B}{B}\Big)^{\eta-\gamma} \\
& \lesssim \frac{1}{(D-B)A^{\eta+1}(A-B)^{\gamma-\eta-1}}.
\end{align*}
The conclusion again follows.
\end{proof}

\begin{proof}[Proof of Theorem \ref{thm:poisson}]
We assume $d\ge 2$; the case $d=1$ is a simplified version of what follows.
According to Theorem \ref{ker_con} we must integrate the right-hand side in \eqref{sb}
with $\mathtt{x}=(x,0,\ldots,0)\in B^d$ and $\mathtt{y}$ replaced by $y\xi$, over $S^{d-1}$ and
with respect to $d\sigma(\xi)$. To do that, we first use the relation
$z \wedge 1 \simeq z\slash (z+1)$, $z >0$, in order to switch to a comparable expression
that is more suitable for the integration. This leads to
$$
G_t^{\nu,\alpha}(x,y) \simeq \int_{S^{d-1}}
	\frac{(1-x)(1-y)}{t^{2\slash \alpha}+|\mathtt{x}-y\xi|^2+(1-x)(1-y)} \,
	\frac{t}{(t^{2\slash \alpha}+|\mathtt{x}-y\xi|^2)^{(d+\alpha)\slash 2}} \, d\sigma(\xi).
$$
Here $|\mathtt{x}-y\xi|^2=x^2+y^2-2xy\xi_1$ so, in fact, the integrated function is zonal.
Thus (see the proof of Lemma \ref{lem:bes})
\begin{align*}
& G_t^{\nu,\alpha}(x,y) \\ & \simeq (1-x)(1-y)t \int_{-1}^1
	\frac{(1-\xi_1^2)^{(d-3)\slash 2}\, d\xi_1}{(t^{2\slash \alpha}+x^2+y^2+(1-x)(1-y)-2xy\xi_1)
		(t^{2\slash \alpha}+x^2+y^2-2xy\xi_1)^{(d+\alpha)\slash 2}}.
\end{align*}
The last integral is handled by Lemma \ref{lem:int} applied with
$A=t^{2\slash \alpha}+x^2+y^2$, $B=2xy$, $D=A+(1-x)(1-y)$, $\gamma = (d+\alpha)\slash 2$
and $\eta = (d-3)\slash 2 = \nu-1\slash 2$. Consequently, we obtain
$$
G_t^{\nu,\alpha}(x,y) \simeq \frac{(1-x)(1-y)t}{(t^{2\slash \alpha}+(x-y)^2+(1-x)(1-y))
	(t^{2\slash \alpha}+x^2+y^2)^{\nu+1\slash 2}(t^{2\slash \alpha}+(x-y)^2)^{(\alpha+1)\slash 2}}.
$$
This, combined with the relations $x^2+y^2 \simeq (x-y)^2+xy$ and
$z\slash (1+z) \simeq z \wedge 1$, $z>0$, finishes the proof.
\end{proof}

We close this section by describing the long time behavior of $G_t^{\nu,\alpha}(x,y)$
for all $\nu>-1$ and $0<\alpha \le 2$.
\begin{theor} \label{thm:large}
Let $\nu>-1$ and $0<\alpha \le 2$. There exists $T>0$ such that
$$
G_t^{\nu,\alpha}(x,y) \simeq (1-x)(1-y) \exp\big(-t \lambda_{1,\nu}^{\alpha}\big), \qquad x,y \in (0,1),
	\quad t \ge T.
$$
Moreover, the upper bound holds for an arbitrary fixed $T>0$.
\end{theor}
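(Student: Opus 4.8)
The plan is to isolate the first term of the defining series \eqref{ker} and show that it governs the behavior for large $t$, while the remaining terms constitute a negligible tail. Writing
$$
G_t^{\nu,\alpha}(x,y) = e^{-t\lambda_{1,\nu}^{\alpha}}\bigg[ \phi_1^{\nu}(x)\phi_1^{\nu}(y) + \sum_{n\ge 2} e^{-t(\lambda_{n,\nu}^{\alpha}-\lambda_{1,\nu}^{\alpha})}\phi_n^{\nu}(x)\phi_n^{\nu}(y)\bigg],
$$
I would first analyze the leading term. Since $\lambda_{1,\nu}$ is the first positive zero of $J_{\nu}$ and $J_{\nu}$ is positive on $(0,\lambda_{1,\nu})$ for $\nu>-1$, the eigenfunction $\phi_1^{\nu}$ is strictly positive on $(0,1)$; moreover it tends to a positive constant as $x\to 0^+$ and vanishes simply at $x=1$ (the zeros of $J_{\nu}$ are simple). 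Continuity then yields $\phi_1^{\nu}(x)\simeq 1-x$ uniformly on $(0,1)$, hence $\phi_1^{\nu}(x)\phi_1^{\nu}(y)\simeq (1-x)(1-y)$, which is precisely the claimed profile.

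The crux is to dominate the tail by the same profile. The key estimate I would establish is
$$
|\phi_n^{\nu}(x)| \lesssim (1-x)\, n^{\beta}, \qquad x\in(0,1),
$$
for some exponent $\beta=\beta(\nu)$, uniformly in $n$. To extract the factor $1-x$ I would use $\phi_n^{\nu}(1)=0$ together with the fundamental theorem of calculus, giving $|\phi_n^{\nu}(x)|\le (1-x)\|(\phi_n^{\nu})'\|_{\infty}$. Differentiating via the identity $\frac{d}{dz}(z^{-\nu}J_{\nu}(z))=-z^{-\nu}J_{\nu+1}(z)$ produces $(\phi_n^{\nu})'(x)=-d_{n,\nu}\lambda_{n,\nu}^{3/2}x^{-\nu}J_{\nu+1}(\lambda_{n,\nu}x)$, and the basic bounds for $d_{n,\nu}$, $\lambda_{n,\nu}\simeq n$ and $J_{\nu+1}$ recalled in \cite[Section 2]{OS} (namely $|J_{\nu+1}(z)|\lesssim z^{\nu+1}$ for small $z$ and $|J_{\nu+1}(z)|\lesssim z^{-1/2}$ for large $z$) then bound $x^{-\nu}|J_{\nu+1}(\lambda_{n,\nu}x)|$ by a fixed power of $\lambda_{n,\nu}$ uniformly in $x$, whence $\|(\phi_n^{\nu})'\|_{\infty}\lesssim n^{\beta}$. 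This step, converting the boundary vanishing into the explicit factor $1-x$ with only polynomial loss in $n$, is the main technical point.

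With this in hand, $|\phi_n^{\nu}(x)\phi_n^{\nu}(y)|\lesssim (1-x)(1-y)\,n^{2\beta}$, so the tail is dominated by $(1-x)(1-y)R(t)$ with
$$
R(t)=\sum_{n\ge 2} e^{-t(\lambda_{n,\nu}^{\alpha}-\lambda_{1,\nu}^{\alpha})} n^{2\beta}.
$$
Because $\lambda_{n,\nu}\simeq n$, the gaps satisfy $\lambda_{n,\nu}^{\alpha}-\lambda_{1,\nu}^{\alpha}\gtrsim n^{\alpha}$ for $n\ge 2$, so the series converges for every $t>0$, is decreasing in $t$, and tends to $0$ as $t\to\infty$. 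For the upper bound this gives $G_t^{\nu,\alpha}(x,y)\lesssim e^{-t\lambda_{1,\nu}^{\alpha}}(1-x)(1-y)\,[1+R(t)]$, and since $R(t)\le R(T)<\infty$ for $t\ge T$, the upper estimate holds for any fixed $T>0$. For the full two-sided bound I would use $\phi_1^{\nu}(x)\phi_1^{\nu}(y)\ge c_0(1-x)(1-y)$ and choose $T$ so large that $R(T)\le c_0/2$; then for $t\ge T$ the leading term beats the tail in the lower estimate as well, yielding $G_t^{\nu,\alpha}(x,y)\simeq e^{-t\lambda_{1,\nu}^{\alpha}}(1-x)(1-y)$.
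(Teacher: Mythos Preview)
Your proposal is correct and follows essentially the same route as the paper: the same decomposition into leading term plus tail, the same identification $\phi_1^{\nu}(x)\simeq 1-x$, the same extraction of the factor $1-x$ from $\phi_n^{\nu}$ via the vanishing at $x=1$ combined with a polynomial-in-$n$ bound on $(\phi_n^{\nu})'$ obtained from the differentiation rule $\frac{d}{dz}(z^{-\nu}J_{\nu}(z))=-z^{-\nu}J_{\nu+1}(z)$, and the same tail control using $\lambda_{n,\nu}\simeq n$. The only cosmetic differences are that the paper uses the Mean Value Theorem where you invoke the fundamental theorem of calculus, specifies the exponent $\beta=\nu+2$ explicitly, and phrases the tail decay via a fixed $\varepsilon<\lambda_{2,\nu}^{\alpha}-\lambda_{1,\nu}^{\alpha}$ rather than your $\gtrsim n^{\alpha}$ gap estimate.
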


\begin{proof}
We decompose, see \eqref{ker},
$$
G_t^{\nu,\alpha}(x,y) = e^{-t\lambda_{1,\nu}^{\alpha}} \phi_1^{\nu}(x)\phi_1^{\nu}(y)
	+ \sum_{n=2}^{\infty} e^{-t\lambda_{n,\nu}^{\alpha}} \phi_n^{\nu}(x)\phi_n^{\nu}(y).
$$
To treat the first term here we note that
$$
\phi_1^{\nu}(x) \simeq (1-x), \qquad x \in (0,1).
$$
Indeed, $\phi_1^{\nu}(x)$ is strictly positive for $x\in (0,1)$, continuous for $x \in [0,1]$
(see \cite[(2.3)]{OScon}), $\phi_1^{\nu}(0)>0$, $\phi_1^{\nu}(1)=0$ and
$\frac{d}{dx} \phi_1^{\nu}(1) = -\sqrt{2}\lambda_{1,\nu}<0$.
Here the last identity follows from the formula
\begin{equation} \label{diff_phi}
\frac{d}{dx} \phi_n^{\nu}(x) = - x^{-\nu-1\slash 2} \lambda_{n,\nu}\, d_{n,\nu}
	(\lambda_{n,\nu}x)^{1\slash 2} J_{\nu+1}(\lambda_{n,\nu}x),
\end{equation}
which is a straightforward consequence of the differentiation rule for $J_{\nu}$, see \cite[(2.1)]{OScon}.

The proof will be finished once we show the bound
$$
\bigg|\sum_{n=2}^{\infty} e^{-t\lambda_{n,\nu}^{\alpha}} \phi_n^{\nu}(x)\phi_n^{\nu}(y)\bigg| \lesssim
	(1-x)(1-y) e^{-t {\lambda_{1,\nu}^{\alpha}}} e^{-t\varepsilon}, \qquad x,y \in (0,1), \quad t \ge T,
$$
for an arbitrary fixed $T>0$ and certain $\varepsilon >0$. To proceed, we first justify the bound
\begin{equation} \label{bound}
\big|\phi_n^{\nu}(x)\big| \lesssim (1-x) n^{\nu+2}, \qquad x \in (0,1), \quad n \ge 1.
\end{equation}
By the Mean Value Theorem, for each $x\in (0,1)$ there exists $\theta \in (x,1)$ such that
$$
\big|\phi_n^{\nu}(x)\big| = \big| \phi_n^{\nu}(x)-\phi_n^{\nu}(1)\big|
	= (1-x) \bigg| \frac{d}{dx} \phi_n^{\nu}(x)\Big|_{x=\theta}\bigg|.
$$
To estimate the last derivative we combine \eqref{diff_phi} with the bounds given in
\cite[Section 2]{OScon}, see \cite[(1.3),(2.5),(2.6)]{OScon}, getting
$$
\bigg| \frac{d}{dx} \phi_n^{\nu}(x)\Big|_{x=\theta}\bigg| \lesssim n \big( 1 \vee n^{\nu+1\slash 2}\big) 
\le n^{\nu+2}, \qquad n \ge 1, \quad \theta \in (0,1).
$$
This gives \eqref{bound}. Now we may write
$$
\sum_{n=2}^{\infty} e^{-t\lambda_{n,\nu}^{\alpha}} \big|\phi_n^{\nu}(x)\phi_n^{\nu}(y)\big| \lesssim
	(1-x)(1-y) e^{-t\lambda_{1,\nu}^{\alpha}} e^{-t \varepsilon}
		\sum_{n=2}^{\infty} n^{2\nu+4} e^{-t(\lambda_{n,\nu}^{\alpha}-\lambda_{1,\nu}^{\alpha}-\varepsilon)}.
$$
Choosing $\varepsilon$ such that $0 < \varepsilon < \lambda_{2,\nu}^{\alpha}-\lambda_{1,\nu}^{\alpha}$
and taking into account that $\lambda_{n,\nu}\simeq n$, $n\to \infty$ (cf. \cite[(2.6)]{OScon}),
we see that the last series is bounded uniformly in $t\ge T$, for any fixed $T>0$. The conclusion follows.
\end{proof}

\section{Comments and remarks} \label{sec:rem}

\subsection*{Elementary cases}
There are only two cases, $\nu=\pm 1\slash 2$, when the Fourier-Bessel system has an explicit form,
i.e. $d_{n,\nu}$, $J_{\nu}$ and $\lambda_{n,\nu}$ are all given explicitly.
We have (see \cite[Section 1]{OS}) $\lambda_{n,-1\slash 2}=\pi (n-1\slash 2)$,
$\lambda_{n,1\slash 2}=\pi n$, and
$$
\phi_n^{-1\slash 2}(x) = \sqrt{2} \cos\big(\pi (n-1\slash 2)x\big), \qquad
\phi_n^{1\slash 2}(x) = x^{-1}\sqrt{2} \sin(\pi n x).
$$
Consequently, the Poisson kernel $G_t^{\nu,1}(x,y)$ can be computed when $\nu=\pm 1\slash 2$.
Indeed, using basic trigonometric identities and the formulas
(cf. \cite[5.4.12 (1),(2)]{PBM})
$$
\sum_{j=1}^{\infty} e^{-Aj}\sin(Bj) = \frac{1}2 \, \frac{\sin B}{\cosh A - \cos B}, \qquad
\sum_{j=1}^{\infty} e^{-Aj}\cos(Bj) = \frac{1}2 \, \frac{\sinh A}{\cosh A - \cos B}-\frac{1}{2},
$$
we arrive at
\begin{align*}
G_t^{-1\slash 2,1}(x,y) & = \frac{\sinh\frac{\pi t}{2}\cos\frac{\pi}2(x-y)}{\cosh \pi t - \cos\pi(x-y)}
 + \frac{\sinh\frac{\pi t}{2}\cos\frac{\pi}2(x+y)}{\cosh \pi t - \cos\pi(x+y)},\\
G_t^{1\slash 2,1}(x,y) & = \frac{1}{2xy}\bigg[
\frac{\sinh \pi t}{\cosh \pi t - \cos \pi(x-y)}-\frac{\sinh \pi t}{\cosh \pi t - \cos \pi(x+y)}\bigg].
\end{align*}
Of course, sharp estimates for $G_t^{\pm 1\slash 2,1}(x,y)$ can be obtained directly from these formulas,
though it is not immediate.

The heat kernel $G_t^{\nu,2}(x,y)$ is `computable' as well for $\nu=\pm 1\slash 2$,
in the sense that the resulting expressions (in fact series) do not contain oscillations.
For the simpler case $\nu=1\slash 2$ we have
$$
G_t^{1\slash 2,2}(x,y) = \frac{1}{xy}\sum_{j \in \mathbb{Z}} \Bigg[ \frac{1}{\sqrt{4\pi t}}
	\exp\bigg({-\frac{(x-y-2j)^2}{4t}}\bigg) - \frac{1}{\sqrt{4\pi t}}
	\exp\bigg({-\frac{(x+y-2j)^2}{4t}}\bigg) \Bigg].
$$
This is essentially a well known Jacobi type identity. The series above represents the heat kernel in the
setting of the system $\{\psi_n^{\nu}\}$ with $\nu=1\slash 2$. Note that the corresponding differential
operator is $\widetilde{L}_{\nu} f(x) = x^{\nu+1\slash 2} L_{\nu} ((\cdot)^{-\nu-1\slash 2}f)(x)$, and
we have $\widetilde{L}_{1\slash 2} = -\Delta$. 
The formula in question can be derived by solving the initial-value
problem for the classical heat equation in the interval $(0,1)$ with Dirichlet boundary conditions, see
\cite[Chapter 3, Exercises 3.5-3.7]{Cannon}. Here the trick relies on extending a function $f$
prescribing initial values on $(0,1)$ to an odd function on $(-1,1)$ and then considering its periodic
extension $\tilde{f}$ to $\mathbb{R}$ with period $2$. The solution of the heat equation on $\mathbb{R}$
with initial values prescribed by $\tilde{f}$, which is given by convolving $\tilde{f}$ with
the Gauss-Weierstrass kernel, provides also the solution of our initial-value problem in the interval
$(0,1)$. The convolution can be written in terms of an integral involving the above series,
due to the symmetries of $\tilde{f}$.

The case $\nu=-1\slash 2$ is slightly more involved, but follows in the same spirit.
Now $L_{\nu}=-\Delta$ and one solves the initial-value problem for the heat equation in $(0,1)$ 
with the Neumann
condition at $x=0$ and the Dirichlet condition at $x=1$. The relevant extension $\tilde{f}$ is obtained
by extending $f$ to $(-1,1)$ as an even function, then taking extension to $(-1,3)$ that is antisymmetric
with respect to $x=1$ (i.e. satisfies $f(x)=-f(2-x)$ for $x\in (-1,3)$), and finally extending this
function from $(-1,3)$ to $\mathbb{R}$ as periodic with period 4. Solving the heat equation with
initial values $\tilde{f}$ and taking into account the symmetries of $\tilde{f}$ one concludes that
\begin{align*}
G_t^{-1\slash 2,2}(x,y) & = \sum_{j\in \mathbb{Z}} \Bigg[ \frac{1}{\sqrt{4\pi t}}
	 \exp\bigg({-\frac{(x-y-4j)^2}{4t}}\bigg) +
	 \frac{1}{\sqrt{4\pi t}} \exp\bigg({-\frac{(x+y-4j)^2}{4t}}\bigg) \\ &  \qquad -
	 \frac{1}{\sqrt{4\pi t}} \exp\bigg({-\frac{(x-y-4(j+\frac{1}2))^2}{4t}}\bigg) -
	 \frac{1}{\sqrt{4\pi t}} \exp\bigg({-\frac{(x+y-4(j+\frac{1}2))^2}{4t}}\bigg) \Bigg].
\end{align*}
The formulas for $G_t^{\pm 1\slash 2,2}(x,y)$ are suitable to study short time behavior since for
that only the terms corresponding to $j=0$ (the case of $\nu=-1\slash 2$) or to $j=0,1$
(the case of $\nu=1\slash 2$) are essential. On the other hand, they indicate that the related analysis
for general $\nu>-1$ is far from being trivial.
We remark that the kernels $G_t^{\pm 1\slash 2,2}(x,y)$ can be also expressed by Jacobi's elliptic
theta functions $\theta_2,\theta_3$ (see \cite[p.\,792]{PBM} for the definitions),
but this representation does not seem to be very useful for our purposes.

\subsection*{Transference}
Next we comment on the transference from the situation of Euclidean balls to the Fourier-Bessel setting
on the interval $(0,1)$. Since the semigroups in both settings are related via the mapping $\varphi$
from Proposition \ref{sem_con}, the same is true for many fundamental operators expressible through
these semigroups. This concerns, for example, maximal operators of the semigroups, fractional
integrals (potential operators) and Laplace transform type multipliers in both settings.
Moreover, since
Lebesgue measure in $B^d$ transforms to the measure $d\mu_{\nu}$, $\nu=d\slash 2-1$, when projecting
via $\varphi$ to $(0,1)$, mapping properties such as $L^p$-boundedness
and weak type estimates of operators defined in the context of Euclidean balls imply the same
kind of mapping properties for the corresponding operators in the Fourier-Bessel setting on $(0,1)$.
Similar concepts of transference are well known also in other settings of classical orthogonal expansions,
for instance in the direction
\emph{Hermite$\longrightarrow$Laguerre}, see \cite{GIT} and \cite[Section 5]{NS4}.

\subsection*{Rough upper bounds}
Another issue to discuss are consequences of the rough estimate \eqref{rb}.
Transferring it by means of Theorem \ref{ker_con} and Lemma \ref{lem:bes}
to the Fourier-Bessel setting on $(0,1)$ we get
\begin{equation} \label{Han}
0 < G_t^{\nu,2}(x,y) \le W_t^{\nu+1\slash 2}(x,y), \qquad x,y \in (0,1), \quad t >0,
\end{equation}
where $\nu=d\slash 2-1$, $d\ge 1$, and
$$
W_t^{\lambda}(x,y) = (xy)^{-\lambda+1\slash 2} \, \frac{1}{2t} \exp\bigg(-\frac{x^2+y^2}{4t}\bigg)
	I_{\lambda-1\slash 2}\Big(\frac{xy}{2t}\Big)
$$
is the heat kernel in the setting of continuous Fourier-Bessel expansions of type $\lambda>-1\slash 2$
on $(0,\infty)$ (the context of the Hankel transform), see \cite{BHNV} or \cite{BCN}.
The bounds \eqref{Han} are easily explained from the probabilistic point of view.
Indeed, $W_t^{\lambda}(x,y)$ is the transition density for the time scaled Bessel process $X_{2t}$
on $(0,\infty)$, see \cite[Appendix I, Section 21]{BS} (here for $\lambda \in (-1\slash 2,1\slash 2)$
the endpoint $x=0$ is assumed to be reflecting), and $G_t^{\lambda-1\slash 2,2}(x,y)$ is the transition
density for $X_{2t}$ killed upon leaving (through $x=1$) the interval $(0,1)$.
Actually, this probabilistic argument works for all $\nu>-1$ and justifies the following.
\begin{propo} \label{prop:Han}
The bounds \eqref{Han} hold for arbitrary $\nu > -1$.
\end{propo}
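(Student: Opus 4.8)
The plan is to realize both kernels as transition densities of one and the same diffusion, the time-scaled Bessel process, and then to deduce the bounds from the elementary principle that killing a Markov process can only diminish its transition density. Write $\lambda=\nu+1\slash 2$, so that $\lambda>-1\slash 2$ precisely when $\nu>-1$. First I would recall that, up to the factor $2$ absorbed into the time change $X_{2t}$, the operator $-\mathcal{L}_{\nu}=\frac{d^2}{dx^2}+\frac{2\nu+1}{x}\frac{d}{dx}$ is the generator of the Bessel process $X$ of dimension $\delta=2\nu+2$ on $(0,\infty)$, whose transition density with respect to $d\mu_{\nu}$ is exactly $W_t^{\nu+1\slash 2}(x,y)$ (cf. \cite[Appendix I, Section 21]{BS}). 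When $\nu\ge 0$ one has $\delta\ge 2$ and the origin is inaccessible, so that no boundary condition is needed there; when $\nu\in(-1,0)$ one has $\delta\in(0,2)$, the origin is accessible, and reflection is imposed, consistently with the Fourier-Bessel data since each $\phi_n^{\nu}$ is regular (finite and nonvanishing) at $x=0$.

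Next I would identify $G_t^{\nu,2}(x,y)$ as the transition density, again with respect to $d\mu_{\nu}$, of the \emph{same} process killed upon first exiting $(0,1)$, i.e. upon hitting the right endpoint $x=1$. This amounts to verifying that the self-adjoint operator $\mathcal{L}_{\nu}$ determined by the spectral expansion coincides with the generator of the Bessel process subject to the reflecting condition at $0$ and the Dirichlet (absorbing) condition at $1$. The eigenfunctions $\phi_n^{\nu}$ vanish at $x=1$ and are regular at $x=0$, which selects precisely these two boundary behaviors.

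Granting the two identifications, the upper bound in \eqref{Han} is immediate. Letting $\tau$ be the first exit time of $X$ from $(0,1)$ and writing $\mathbb{P}_x,\mathbb{E}_x$ for the law and expectation of the process started at $x$, the killed density satisfies
$$
G_t^{\nu,2}(x,y)\, d\mu_{\nu}(y) = \mathbb{P}_x\big(X_{2t}\in dy,\ \tau>2t\big)
\le \mathbb{P}_x\big(X_{2t}\in dy\big) = W_t^{\nu+1\slash 2}(x,y)\, d\mu_{\nu}(y),
$$
since the event on the left is contained in the event on the right. Equivalently, the strong Markov property at $\tau$ yields the hitting-time decomposition $W_t^{\nu+1\slash 2}=G_t^{\nu,2}+\mathbb{E}_x[\,\cdots;\,\tau\le 2t\,]$ in which the added term is nonnegative. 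The strict positivity $G_t^{\nu,2}(x,y)>0$ then follows from irreducibility: since $X$ has continuous paths, for any interior $x,y$ there is positive probability that the process moves from $x$ into a neighborhood of $y$ within process-time $2t$ without reaching $1$.

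The main obstacle is the second step, namely the precise matching of boundary conditions, especially in the range $\nu\in(-1,0)$ where the origin is a genuine accessible boundary and the reflecting condition must be justified and shown to be the one singled out by the Fourier-Bessel eigenbasis; for $\nu\ge 0$ the origin drops out of the discussion. Once this identification is in place, the remaining assertions are the standard domination-by-killing and irreducibility facts for one-dimensional diffusions, and no further computation is required.
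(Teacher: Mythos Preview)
Your proposal is correct and follows essentially the same route as the paper: the paper's justification of Proposition~\ref{prop:Han} is precisely the probabilistic one you outline, namely that $W_t^{\nu+1/2}(x,y)$ is the transition density of the time-scaled Bessel process $X_{2t}$ on $(0,\infty)$ (with reflection at $0$ when $\lambda\in(-1/2,1/2)$) and $G_t^{\nu,2}(x,y)$ is the transition density of the same process killed upon leaving $(0,1)$ through $x=1$, so that the domination is immediate. The paper does not elaborate further on the boundary-condition matching you flag as the main obstacle; your more careful discussion of that point is a welcome addition rather than a deviation.
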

This result has several interesting consequences. One of them is, see \cite[Section 6]{NS5}, the fact
that the semigroups $\{T_t^{\nu,\alpha}\}_{t>0}$, $\nu>-1$, $0<\alpha\le 2$, are submarkovian symmetric
diffusion semigroups (to get this for $\alpha<2$ we use the subordination principle).
Since, in view of the probabilistic interpretation, both inequalities in \eqref{Han} are strict,
these semigroups are not Markovian.
Similar conclusions are valid for the corresponding semigroups in the setting of the system
$\{\psi_n^{\nu}\}$, with the restriction $\nu \in \{-1\slash 2\}\cup [1\slash 2,\infty)$,
see \cite[Proposition 6.1]{NS5}.

\subsection*{Maximal operators}
Further important consequences of Proposition \ref{prop:Han} concern mapping properties
of the maximal operators
$$
T_*^{\nu,\alpha}f = \sup_{t>0} \big| T_t^{\nu,\alpha}f\big|.
$$
Since $T_*^{\nu,2}$ is controlled by the analogous maximal operator $W_*^{\nu+1\slash 2}$
in the Bessel setting on $(0,\infty)$ related to the measure $d\mu_{\nu}$, it inherits
mapping properties of $W_*^{\nu+1\slash 2}$. The same is true in the context of the system
$\{\psi_n^{\nu}\}$ and the Bessel setting on $(0,\infty)$ related to Lebesgue measure.
These observations allow to transmit positive parts of \cite[Theorem 2.1]{BHNV} to the
Fourier-Bessel setting.
\begin{theor} \label{thm:max}
Let $\nu > -1$, $1\le p < \infty$, $\delta \in \mathbb{R}$.
Then the maximal operator $T_*^{\nu,2}$, considered on the measure space
$((0,1),x^{\delta}dx)$, has the following mapping properties:
\begin{itemize}
\item[(a)]
$T_*^{\nu,2}$ is of strong type $(p,p)$ if
$p>1 \; \textrm{and} \; -1<\delta<(2\nu+2)p-1;$
\item[(b)]
$T_*^{\nu,2}$ is of weak type $(p,p)$ if
$-1<\delta<(2\nu+2)p-1 \; \textrm{or} \;\;  \delta=2\nu+1;$
\item[(c)]
$T_*^{\nu,2}$ is of restricted weak type $(p,p)$ if
$-1<\delta\le(2\nu+2)p-1.$
\end{itemize}
Moreover, $T_*^{\nu,2}$ is of strong type $(\infty,\infty)$.
\end{theor}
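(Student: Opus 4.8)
The plan is to derive all four assertions from the pointwise kernel bound of Proposition \ref{prop:Han} by transferring the known mapping properties of the Bessel heat semigroup maximal operator $W_*^{\nu+1\slash 2}$ on $(0,\infty)$, which are the content of \cite[Theorem 2.1]{BHNV}. The guiding principle is that pointwise domination of kernels propagates to pointwise domination of maximal operators, and that boundedness (but not its failure) is preserved under such domination; this is exactly why only the \emph{positive} parts of \cite[Theorem 2.1]{BHNV} can be transmitted.

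First I would record the pointwise domination. Since $G_t^{\nu,2}(x,y)$ is the integral kernel of $T_t^{\nu,2}$ with respect to $d\mu_\nu$, and by Proposition \ref{prop:Han} we have $0<G_t^{\nu,2}(x,y)\le W_t^{\nu+1\slash 2}(x,y)$ for $x,y\in(0,1)$, it follows that for every $f$ on $(0,1)$,
\[
\big|T_t^{\nu,2}f(x)\big| \le \int_0^1 W_t^{\nu+1\slash 2}(x,y)\,|f(y)|\,d\mu_\nu(y) = W_t^{\nu+1\slash 2}|\tilde f|(x), \qquad x\in(0,1),
\]
where $\tilde f$ is the extension of $f$ by zero to $(0,\infty)$ and $W_t^{\nu+1\slash 2}$ on the right denotes the Bessel heat operator on $(0,\infty)$. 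Taking the supremum over $t>0$ yields the pointwise control $T_*^{\nu,2}f \le W_*^{\nu+1\slash 2}|\tilde f|$ on $(0,1)$.

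Next I would transfer the weighted estimates. Zero-extension is isometric on the domain side, $\|\tilde f\|_{L^p((0,\infty),x^\delta dx)}=\|f\|_{L^p((0,1),x^\delta dx)}$, while restriction from $(0,\infty)$ to $(0,1)$ cannot increase any strong, weak, or restricted weak norm on the range side. Consequently each mapping property of $W_*^{\nu+1\slash 2}$ on $((0,\infty),x^\delta dx)$ descends to $T_*^{\nu,2}$ on $((0,1),x^\delta dx)$. Invoking \cite[Theorem 2.1]{BHNV} with $\lambda=\nu+1\slash 2$ then produces (a), (b) and (c) once the admissible ranges of $\delta$ and the exact type of bound are matched to the statements there; in particular the endpoint phenomena $\delta=2\nu+1$ (weak type) and $\delta=(2\nu+2)p-1$ (restricted weak type) are precisely the sharp endpoint cases of \cite[Theorem 2.1]{BHNV}.

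Finally, the strong type $(\infty,\infty)$ estimate is immediate and independent of $\delta$, since on $(0,1)$ the measures $x^\delta dx$ and $dx$ are mutually absolutely continuous: the semigroup $\{T_t^{\nu,2}\}_{t>0}$ is submarkovian (as noted after Proposition \ref{prop:Han}), so each $T_t^{\nu,2}$ is an $L^\infty$-contraction and hence $\|T_*^{\nu,2}f\|_\infty\le\|f\|_\infty$; equivalently, this follows from
\[
\int_0^1 G_t^{\nu,2}(x,y)\,d\mu_\nu(y) \le \int_0^\infty W_t^{\nu+1\slash 2}(x,y)\,d\mu_\nu(y) \le 1 .
\]
The main obstacle is not the transference mechanism, which is routine, but the careful bookkeeping required to align the weight ranges and the precise notions of boundedness in the present normalization with those in \cite[Theorem 2.1]{BHNV}; the endpoint refinements in (b) and (c) are the delicate points, as they rely on the sharp (restricted) weak type conclusions of that theorem rather than on the crude domination alone.
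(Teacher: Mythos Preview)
Your proposal is correct and follows exactly the route the paper takes: pointwise domination via Proposition~\ref{prop:Han}, then importing the positive parts of \cite[Theorem~2.1]{BHNV} through the obvious extension/restriction argument, with the $(\infty,\infty)$ bound read off from submarkovianity. You have simply supplied the details the paper leaves implicit in the paragraph preceding Theorem~\ref{thm:max}.
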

Analogous results, with appropriate adjustments, hold also in the setting of $\{\psi_n^{\nu}\}$,
see \cite[Remark 3.2]{BHNV}. Boundedness properties with more general weights, and even in
tensor product multi-dimensional Fourier-Bessel settings, can be deduced in a similar manner
from the corresponding results in \cite{BCN} and \cite{CSz}; we leave details to interested readers.
The above consequences complement and extend previous results on the heat semigroup maximal operator
related to the system $\{\psi_n^{\nu}\}$ and obtained in \cite{Gilbert} and \cite{CRae}.
Note that, in view of the subordination principle,
the abovementioned mapping properties are inherited by the maximal operators of the subordinated semigroups.
Note also that Theorem \ref{thm:max} is not optimal in the sense that it does not take into account
the boundary behavior at the right endpoint of $(0,1)$, which in comparison to the continuous Bessel case
is improved by the decay of the heat kernel, see Theorem \ref{thm:main}.
Actually, this lack of optimality pertains to many other weighted results obtained so far for various
operators in the Fourier-Bessel setting.

The results of this paper shed some new light to the problems posed in \cite{Bet} in which the
heat semigroup comes into play. In particular, in connection with the question \cite[Q.6]{Bet},
it turns out that there is a pencil phenomenon associated with the heat semigroup maximal
operator related to the system $\{\psi_n^{\nu}\}$, and its nature is the same as in the case
of continuous Fourier-Bessel expansions in the Lebesgue measure setting. Moreover, Theorem \ref{thm:main}
enables to establish a sharp description of the pencil phenomenon in the Fourier-Bessel framework
when the type index is half-integer.

\subsection*{Final conjecture}
We close the paper with the following natural conjecture. Proving (or disproving) it is
an interesting and important open problem in the theory of Fourier-Bessel expansions.
\begin{conj}
The estimates from Theorem \ref{thm:main} and Theorem \ref{thm:poisson} hold for all $\nu>-1$.
\end{conj}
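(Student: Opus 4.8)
The plan is to bypass the geometric identification with $B^d$ — which is what forces $\nu$ to be half-integer — and instead work directly with the probabilistic and PDE description of $G_t^{\nu,2}(x,y)$ that is available for every $\nu>-1$. As explained in the discussion preceding Proposition \ref{prop:Han}, for arbitrary $\nu>-1$ the kernel $G_t^{\nu,2}(x,y)$ is the transition density of the time-scaled Bessel process of (possibly non-integer) dimension $\delta=2\nu+2>0$, reflected at the origin when $\delta<2$ and killed upon reaching $x=1$; equivalently, it is the Dirichlet heat kernel on $(0,1)$ of $\mathcal L_\nu$, a diffusion that is uniformly elliptic with smooth coefficients near the regular endpoint $x=1$ and carries a Bessel-type singularity near $x=0$. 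The key point is that every qualitative feature appearing in Theorem \ref{thm:main} is intrinsic to this one-dimensional diffusion and in no way needs $\delta$ to be an integer.

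First I would separate the two mechanisms behind the bound. The interior and near-origin factor $(xy)^{-\nu-1/2}(xy/t\wedge 1)^{\nu+1/2}t^{-1/2}\exp(-(x-y)^2/(ct))$ is exactly the global Bessel heat kernel $W_t^{\nu+1/2}(x,y)$ of the Hankel-transform setting, whose sharp two-sided behaviour for all $\nu>-1$ follows immediately from the asymptotics of $I_\nu$ already used in the proof of Theorem \ref{thm:main}. The extra factor $[(1-x)(1-y)/t\wedge 1]$ records the effect of killing at $x=1$. Since near $x=1$ the drift $(2\nu+1)/x$ is smooth and bounded and $d\mu_\nu$ is comparable to Lebesgue measure, the boundary behaviour there is, to leading order, that of the Dirichlet Laplacian on an interval, hence $\nu$-independent. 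Concretely, I would establish the survival-probability estimate $\mathbb P_x(\tau_1>t)\simeq (1-x)/\sqrt t\wedge 1$, where $\tau_1$ is the first hitting time of $1$, by comparison with one-dimensional Brownian motion using the regularity of the coefficients near $1$.

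For the upper bound the strict inequality $G_t^{\nu,2}\le W_t^{\nu+1/2}$ already holds for all $\nu>-1$ by Proposition \ref{prop:Han}; the additional boundary decay $(1-x)(1-y)/t\wedge 1$ would be extracted from the Duhamel (Dynkin) identity expressing the killed kernel as $W_t^{\nu+1/2}$ minus a boundary correction supported on $\{\tau_1<t\}$, localized to a neighbourhood of $x=1$ where $\mathcal L_\nu$ is a regular perturbation of $-\Delta$, and then estimated via the survival bound. The main obstacle, just as in the ball case handled by Zhang and Song, is the off-diagonal lower bound in the boundary layer. Here I would run the standard chaining scheme: from the semigroup property $G_t^{\nu,2}(x,y)\ge\int\!\int G_{t/3}^{\nu,2}(x,z)\,G_{t/3}^{\nu,2}(z,w)\,G_{t/3}^{\nu,2}(w,y)\,d\mu_\nu(z)\,d\mu_\nu(w)$, restrict $z,w$ to an intermediate region well inside $(0,1)$ and bound each factor below by the near-diagonal estimate from the parabolic Harnack inequality — which should be available because the power weight $x^{2\nu+1}$ is doubling and supports a scale-invariant Poincar\'e inequality for every $\nu>-1$ — together with the survival estimate at the two endpoints. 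Making this Harnack chain quantitatively sharp in the boundary layer, so that it reproduces the precise linear factors $(1-x)$ and $(1-y)$ and not merely their powers, is the delicate part, and is exactly where the full strength of the intrinsic-ultracontractivity and boundary-Harnack machinery would be required.

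Finally, Theorem \ref{thm:poisson} would follow from Theorem \ref{thm:main} by subordination. Writing $G_t^{\nu,\alpha}(x,y)=\int_0^\infty G_s^{\nu,2}(x,y)\,\eta_t^{\alpha/2}(s)\,ds$, with $\eta_t^{\alpha/2}$ the density of the $\alpha/2$-stable subordinator, one inserts the sharp heat bounds just obtained and evaluates the resulting integral through the known two-sided estimates for $\eta_t^{\alpha/2}$. This is precisely the route by which Song passes from \eqref{hb} to \eqref{sb}, and it should transfer verbatim to the interval $(0,1)$ once the heat bounds are established for all $\nu>-1$.
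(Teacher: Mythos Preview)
The statement you are addressing is a \emph{Conjecture}, explicitly flagged in the paper as ``an interesting and important open problem''; the paper offers no proof whatsoever. So there is nothing to compare against, and the relevant question is whether your proposal actually constitutes a proof.

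It does not. What you have written is a plausible programme, not an argument. You yourself identify the crux --- obtaining the \emph{sharp} lower bound with the precise linear factors $(1-x)(1-y)$ in the boundary layer --- and then defer it to ``the full strength of the intrinsic-ultracontractivity and boundary-Harnack machinery'' without supplying that machinery in the present setting. Two concrete points where the sketch is incomplete:
\begin{itemize}
\item[(i)] The parabolic Harnack inequality you invoke is not off-the-shelf for $\mathcal L_\nu$ near the singular endpoint $x=0$. The weight $x^{2\nu+1}$ is doubling and supports a Poincar\'e inequality on $(0,\infty)$, but the operator is degenerate at $0$ and, for $-1<\nu<0$, the origin is an accessible reflecting boundary for the Bessel process. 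One must verify that the Grigor'yan--Saloff-Coste or Sturm criteria apply on balls touching $0$, or else produce the two-sided near-diagonal estimate there by another route; simply asserting that Harnack ``should be available'' is the gap.
\item[(ii)] At $x=1$, the chaining scheme you describe typically yields a boundary factor of the form $(1-x)^\gamma(1-y)^\gamma$ for \emph{some} $\gamma>0$; upgrading this to $\gamma=1$ requires a sharp boundary Harnack principle (or intrinsic ultracontractivity with identification of the ground state behaviour $\phi_1^\nu(x)\simeq 1-x$). You cite these ingredients but do not establish them for $\mathcal L_\nu$. This is exactly the step that, in the ball setting, took the work of Davies--Simon, Zhang, and Song to complete, and it is not shortened by passing to one dimension with a singular drift.
\end{itemize}
The subordination step from Theorem~\ref{thm:main} to Theorem~\ref{thm:poisson} is indeed routine once the heat bounds are in hand, so the entire weight of the conjecture rests on the two items above. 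Your outline is a reasonable roadmap, but as written it is a statement of what would need to be proved rather than a proof.
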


\end{document}